\documentclass[review,onefignum,onetabnum]{siamart250211}

\usepackage{amssymb}
\usepackage{amsmath}
\usepackage{mathtools}
\usepackage{xcolor}
\usepackage{subfigure}
\usepackage{hyperref}
\usepackage{cleveref}
\usepackage{graphicx}
\usepackage{subcaption} 
\usepackage{algorithm}
\usepackage{algpseudocode}
\usepackage{adjustbox}

\usepackage[colorinlistoftodos,bordercolor=orange,backgroundcolor=orange!20,linecolor=orange,textsize=scriptsize]{todonotes}

\newcommand{\bx}{\boldsymbol{x}}
\newcommand{\by}{\boldsymbol{y}}
\newcommand{\bz}{\boldsymbol{z}}

\newcommand{\Y}{\mathcal{Y}}
\newcommand{\Z}{\mathcal{Z}}

\newcommand{\bp}{\boldsymbol{p}}
\newcommand{\R}{\mathbb{R}}
\newcommand{\Rb}{\bar{\mathbb{R}}}
\newcommand{\Pp}{\mathcal{P}}
\newcommand{\F}{\mathcal{F}}
\newcommand{\Cc}{\mathcal{C}}
\newcommand{\Ll}{\mathcal{L}}

\newcommand{\Uu}{\mathcal{U}}

\DeclareMathOperator*{\BUC}{BUC}
\DeclareMathOperator*{\intr}{int}
\DeclareMathOperator*{\dom}{dom}
\DeclareMathOperator*{\argmin}{argmin}
\DeclareMathOperator*{\Jac}{Jac}
\DeclareMathOperator{\prox}{prox}
\DeclareMathOperator{\inter}{int}
\DeclareMathOperator{\dist}{dist}

\newsiamremark{remark}{Remark}
\newcommand{\newsiamassump}[2]{
  \theoremstyle{plain}
  \theoremheaderfont{\normalfont \bf}
  \theorembodyfont{\normalfont}
  \theoremseparator{}
  \theoremsymbol{}
  \newtheorem{#1}{#2}
}

\newsiamassump{assumption}{Assumption}

\title{
Algorithms and Differential Game Representations for Exploring Nonconvex Pareto Fronts in High Dimensions
\thanks{Submitted to the editors \today.} 
\funding{This research is supported by the DARPA DIAL grant HR00112490484, a Laboratory University Collaboration Initiative (LUCI) award sponsored by the Basic Research Office (BRO) of the Office of the Under Secretary of Defense for Research and Engineering (OUSD R\&E), 
the U.S. Department of Energy (DOE), Office of Science, Advanced Scientific Computing Research (ASCR) program under the Scientific Discovery through Advanced Computing (SciDAC) Institute “LEADS: LEarning-Accelerated Domain Science,” Subcontract \#831126 under DE-AC05-76RL01830, the Office of Naval Research (ONR) In-House Laboratory Independent Research Program (ILIR) at NAWCWD, and a Department of Defense (DoD) SMART Scholarship for Service SEED Innovation Award. \textit{Distribution Statement A. Approved for Public Release; Distribution is Unlimited. PR 26-0024.}} }

\author{
Shanqing Liu\thanks{Division of Applied Mathematics, Brown University, Providence, RI~\email{shanqing\_liu@brown.edu}, \email{youngkyu\_lee@brown.edu}, \email{jerome\_darbon@brown.edu}.}
\and
Paula Chen\thanks{Naval Air Warfare Center Weapons Division China Lake, China Lake, CA~\email{paula.x.chen.civ@us.navy.mil}.}
\and
Youngkyu Lee\footnotemark[2]
\and
J\'er\^ome Darbon\footnotemark[2]
}

\begin{document}
\nolinenumbers

\maketitle

\begin{abstract} 
We develop a new Hamiton-Jacobi (HJ) and differential game approach for exploring the Pareto front of (constrained) multi-objective optimization (MOO) problems. 
Given a preference function, we embed the scalarized MOO problem into the value function of a parameterized zero-sum game, whose upper value solves a first-order HJ equation that admits a Hopf-Lax representation formula.  
For each parameter value, this representation yields an inner minimizer that can be interpreted as an approximate solution to a shifted scalarization of the original MOO problem.
Under mild assumptions, the resulting family of solutions maps to a dense subset of 
the weak Pareto front. 
Finally, we propose a primal-dual algorithm based on this approach for solving the corresponding optimality system. 
Numerical experiments show that our algorithm mitigates the curse of dimensionality (scaling polynomially with the dimension of the decision and objective spaces) and is able to expose continuous curves along nonconvex Pareto fronts in 100D in just $\sim$100 seconds.
\end{abstract}

\begin{keywords}
Multi-objective optimization, Pareto optimality, Hamilton-Jacobi equations, differential games, Hopf-Lax formula, primal-dual algorithms
\end{keywords}

\begin{AMS}
    49M29, 49M37, 49K35, 65K05
\end{AMS}

\section{Introduction}

\subsection{Motivation and context}

Many important problems in economics, game theory, engineering, and data-driven modeling involve the simultaneous optimization of multiple objectives (see, e.g., \cite{stadler1988fundamentals,marler2004survey}). For example, robotics and autonomous system development often involves balancing performance, energy consumption, and safety constraints; aerospace applications generally require tradeoffs between fuel usage,
speed, and structural loads; and machine learning-based tasks inherently necessitate weighing accuracy, robustness, and interpretability.  
All of these cases can be formulated as a \emph{multi-objective optimization} (MOO) problem of the form 
\begin{equation}\label{MOO_obj}
\min_{u \in U}\ell(u) := ( \ell_1(u), \ell_2(u), \dots, \ell_N(u) )\in\R^N.
\end{equation}
In general, one cannot identify a single $u^*$ that simultaneously minimizes all components of the vector-valued objective function $\ell$, and optimality is often defined using \emph{Pareto optimality}~\cite{pareto1964cours} (see also~\cite{yu1974cone}). Broadly speaking, a solution is \emph{Pareto optimal} if there does not exist another solution that can improve any criteria without deteriorating another criterion.
The set of all Pareto optimal decisions (the \emph{Pareto optimal set}) and its image under $\ell$ (the \emph{Pareto front}) then summarize the attainable tradeoffs and form fundamental objects of interest in MOO. 

One fundamental challenge in MOO is dealing with \emph{nonconvex} Pareto fronts (i.e., the Pareto front is the boundary of a nonconvex set).
In particular, a common approach for solving MOO problems is to use a \emph{weighted-sum} \emph{scalarization} that combines the vector-valued objective into a single scalar criterion $\sum_{i=1}^N w_i \ell_i(u)$, where the weights $w \in (\R_+)^N$ encode the relative ``importance'' of each objective~\cite{jahn2011vector}.
However, such weighted sum scalarizations can only recover the convex envelope of the Pareto front and hence cannot be used to explore nonconvex portions (see, e.g., \cite{wierzbicki1982mathematical,lee2024automatic,cao2025automatic}).
This limitation motivates the need for alternative techniques and algorithms for systematically exploring nonconvex geometries.

Another difficulty is the \emph{curse of dimensionality}, which appears both in the dimension $N$ of objectives  and in the dimension $d$ of the decision space.
Indeed, the Pareto front of an $N$-objective optimization problem forms an $(N-1)$-dimensional surface in the objective space.
Hence, obtaining accurate approximations with reasonable coverage of the true front often involves solving a large collection of parametric scalarized problems, and this effort can grow quickly with $N$ (e.g., see \cite{novak2008tractability, coello2007evolutionary}).
At the same time, each scalarized subproblem must be solved in a
$d$-dimensional \emph{search space}, which in turn generally induces the usual curse of dimensionality associated with high-dimensional optimization and dynamic programming~\cite{bellman1957dynamic,bertsekas2012dp}.
As such, practical exploration of high-dimensional Pareto fronts requires efficient computational procedures that avoid uniform gridding of the decision and/or objective space.  

\subsection{Contributions}

\begin{figure}
    \centering
    \includegraphics[width=0.99\linewidth]{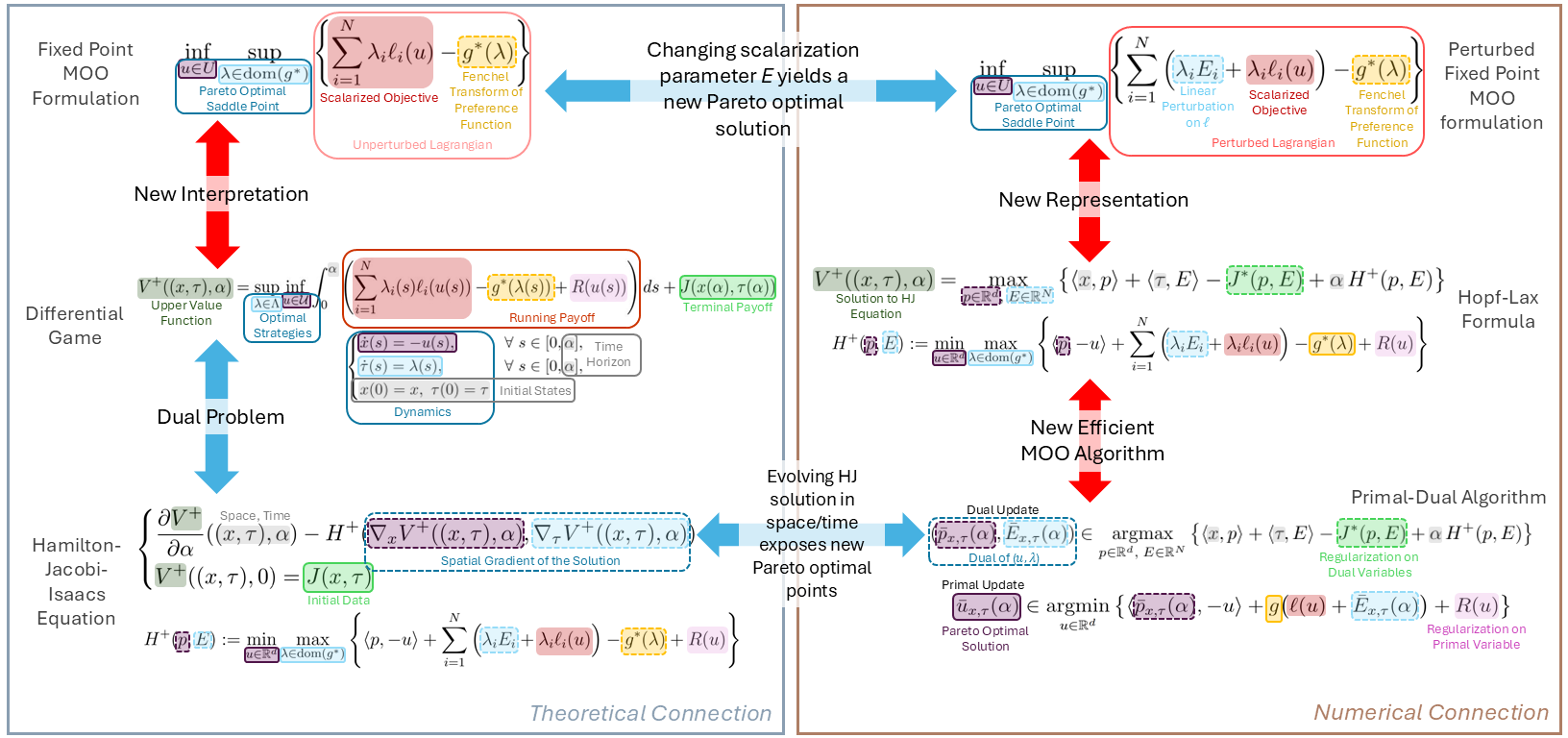}
    \caption{\textit{Overview of our approach.} Evolution of the Pareto front is characterized by an associated differential game and HJ equation. A Hopf-Lax representation then yields an efficient primal-dual scheme for the numerical exploration of high-dimensional, potentially nonconvex Pareto fronts. Colors indicate quantities that are equivalent between problem formulations (solid/no lines) and their duals (dotted lines).}
    \label{fig:overview}
\end{figure}

We introduce a new Hamilton-Jacobi (HJ) and differential game approach to exploring the Pareto front of (constrained) MOO problems. 
In particular, we provide a \textit{new Hopf-Lax representation for the Pareto front }that is interpretable in terms of HJ theory and differentiaLl games as follows. Given a smooth vector objective $\ell:\R^d\to\R^N$ and a monotone \emph{preference function} $g:\R^N\to\R$, 
minimizing $g(\ell(u)+E)$ yields Pareto-relevant points for appropriate  $E\in\R^N$.  
Rather than addressing each scalarization independently, we embed this family of scalarizations into a parameterized zero-sum game whose
upper value $V^+$ satisfies a first-order HJ equation. 
Under mild assumptions, $V^+$ admits a Hopf-Lax representation, which, for each parameter value, produces dual variables $(p_\alpha,E_\alpha)$ and 
an inner minimizer $u_\alpha$ that solves an approximate shifted scalarization $u\mapsto g(\ell(u)+E_\alpha)$ with an explicit error controlled by a Bregman divergence. Cluster points of parameterized sequences along which the regularization error vanishes then solve the original saddle point problem induced by $g$, thereby
generating weak Pareto optimal points. 

In turn, this mathematical connection suggests efficient numerical solvers based on the optimality system associated with the inner minimization problem for $u_\alpha$. 
Along these lines, we establish a primal-dual algorithm for Pareto front exploration based on this approach  that scales polynomially in both $N$ and $d$ and provide corresponding Kurdyka-Łojasiewicz (KL)-based convergence analysis. Numerical experiments illustrate that our algorithm is able to expose \textit{continuous curves} along \textit{nonconvex} Pareto fronts, while \textit{mitigating the curse of dimensionality} (with cases that scale up to $d=100$, $N=5$). 
Our approach is summarized in~\Cref{fig:overview}. 


\subsection{Paper organization}
In~\Cref{sec-prem}, we provide background on MOO, HJ equations, and differential games. 
In~\Cref{sec-main}, we establish key results rigorously justifying the representation of the Pareto front using HJ equations and differential games. A primal-dual algorithm is also proposed to numerically solve the corresponding optimality system.  
In~\Cref{sec-con}, we extend the results of the previous section to constrained MOO problems. 
Numerical examples are given in~\Cref{sec-numerics}. A brief summary and some future directions are provided in~\Cref{summary}.

\section{Preliminaries}\label{sec-prem}

\subsection{Notation}\label{sec:notation}

Let $X$ be a Banach space and $X^*$ be its topological dual space. 
We denote by $\R$ the set of real numbers, $\R_+$ the set of nonnegative real numbers, and $\Rb = \R \cup \{+\infty \}$. 
Consider a function $f:X \to \Rb$ with domain $\dom(f)$. Then, the \textit{Legendre-Fenchel transform}~\cite{bonnans2006numerical,nocedal2006numerical} $f^*: X^*\to \Rb$ of $f$ is defined by
\begin{equation}\label{legendre}
f^*(\bp) = \sup_{\bx \in X} \{ \langle \bp,\bx \rangle - f(\bx) \}, \ \forall \ \bp \in X^*,
\end{equation}
where $\langle \cdot,\cdot \rangle$ denotes the duality pairing for every $(\bp,\bx) \in X^* \times X$. 

Let $x,y \in \R^d$. We say that $x \leq y$ if $x_i \leq y_i$ for every $i \in \{1,2,\dots,d \}$ and $x < y$ if, in addition, there exists at least one $j\in \{1,2,\dots,d \}$ such that $x_j < y_j$. 
We call a function $g :\R^d \to \R$ \textit{nondecreasing} if and only if for every $x,y \in \R^d$ with $x \leq y$, one has $g(x)\leq g(y)$. Similarly, we call a function $g :\R^d \to \R$ \textit{increasing} if and only if for every $x,y \in \R^d$ with $x < y$, one has $g(x)< g(y)$. Note that these definitions are special cases of monotonicity with respect to a cone. In general, let $K \subset \R^d$ be a convex cone and let $\intr K$ denote its interior. Then, the cone $K$ induces a partial ordering defined by $ x \preceq_Ky$ if and only if $y-x \in K$. A function $g:\R^d \to \R$ is called \textit{$K-$nondecreasing} if $x\preceq_Ky$ implies $g(x) \leq g(y)$ and \textit{strictly $K-$increasing} if $x\prec_Ky$ (that is, $y-x \in \intr K$) implies $g(x) < g(y)$. In the case where $K = \R_+^d$, this partial ordering coincides with the coordinatewise order defined above. 

For any Banach spaces $X$ and $Y$, we denote by $\BUC(X;Y)$ the space of bounded, uniformly continuous functions from $X$ to $Y$. We use $\Cc^1(X;Y)$ to denote the space of continuously differentiable (in the Fr\'echet sense) functions from $X$ to $Y$. 
For $T>0$ and $d\geq 1$, we denote by $W^{1,1}([0,T];\R^d)$ the Sobolev space of functions from $[0,T]$ to $\R^d$ that are integrable and whose weak derivatives are also integrable in $[0,T]$.   

\subsection{Multi-objective optimization and Pareto optimality}

We are interested in solving multi-objective optimization (MOO) problems. The goal of a MOO problem is to minimize an $N-$dimensional vector-valued function $\ell : U \subseteq \R^d \to \R^N$, as in~\eqref{MOO_obj}.  
We make the following basic assumptions on $\ell$. 
\begin{assumption}\label{assump_f} \textit{}
    \begin{enumerate}
        \item[i.] $\{\ell_i \}_{i \in \{1,2,\dots, N \}}$ are lower semicontinuous (lsc) on $U$. 
        \item[ii.] $\{\ell_i \}_{i \in \{1,2,\dots, N \}}$ are \textit{proper}, that is $\ell_i(u)>-\infty$ for every $u \in U$ and there exists a $u^0\in U$ such that $\ell_i(u^0) < +\infty$. 
    \end{enumerate}
\end{assumption}

In general, the MOO problem~\eqref{MOO_obj} does not admit a classical solution that minimizes all of the objective functions $\ell_i$ simultaneously. Therefore, the notion of an optimal solution must be understood in terms of a suitable concept of optimality. In this work, we focus on identifying candidates within the framework of Pareto optimality~\cite{censor1977pareto,miettinen1999nonlinear}. To do so, we recall the definition of dominance in the sense of minimizing $\ell$, which is closely related to Pareto optimality.

\begin{definition}\label{MOO_dominance} For every $u^1, u^2 \in U$, 
    \begin{enumerate}
        \item[i.] $u^1$ \textit{dominates} $u^2$ (denoted by $u^1 \succeq u^2$) if and only if $\ell(u^1) < \ell(u^2)$. 
        \item[ii.] $u^1$ \textit{strictly dominates} $u^2$ (denoted by $u^1 \succ u^2$) if and only if $\ell_i(u^1) < \ell_i(u^2),  \forall  i$. 
    \end{enumerate}
\end{definition}

\begin{definition}\label{Pareto}
For any $u\in U$,
\begin{enumerate}
    \item[i.] $u$ is a \textit{strong Pareto optimal solution} if and only if there is no solution that dominates $u$, i.e., there is no other $u'$ that improves one objective without worsening another objective. We denote by $\Pp(\ell)$ the set of all strong Pareto optimal solutions of~\eqref{MOO_obj}. 
    \item[ii.] $u$ is a \textit{weak Pareto optimal solution} if and only if there is no solution that strictly dominates $u$, i.e., there is no other $u'$ that can improve all objectives at once. We denote by $\Pp_w(\ell)$ the set of weak Pareto optimal solutions of~\eqref{MOO_obj}.
\end{enumerate}
\end{definition}

It follows directly from~\Cref{Pareto} that $\Pp(\ell) \subseteq \Pp_w(\ell)$. 
\begin{definition}\label{MOO_ParetoFront} 
In the context of minimizing $\ell$, we call the \textit{Pareto front} $\F(\ell)$ (\textit{weak Pareto front} $\F_w(\ell)$, resp.) the image of the set of all strong Pareto optimal solutions $\Pp(\ell)$ (the set of all weak Pareto optimal solutions $\Pp_w(\ell)$, resp.); that is
    \begin{equation}\label{pareto_front}
    \F(\ell) := \{\ell(u) \mid u \in \Pp(\ell) \},  \quad \F_w(\ell) := \{\ell(u) \mid u \in \Pp_w(\ell) \} .
    \end{equation}
\end{definition}

Note that when there exists a solution $u^*$ that optimizes all of the objective functions $\{\ell_i\}_{i\in\{1,2,\dots, N\}}$ at once, then the Pareto front is a singleton
\begin{equation}
\F(\ell) = \F_w(\ell) = \left\{ ( \ell_1(u^*), \ell_2(u^*), \dots, \ell_N(u^*) ) \right\}.
\end{equation}


\subsection{Hamilton-Jacobi equations and differential games}
In this work, we reinterpret exploration of the Pareto front in MOO as the evolution of a differential game and its associated Hamilton-Jacobi (HJ) equation. To do so, we consider an HJ equation \eqref{HJ_general}  of the form
\begin{equation}
\label{HJ_general}
    \begin{dcases}
        F(x,\nabla_x V) := \frac{\partial V}{\partial t}(x,t) - H(x,\nabla_x V(x,t)) = 0, & (x,t) \in \R^d \times \R_+, \\
        V(x,0) = J(x), & x\in \R^d,
    \end{dcases}
\end{equation}
where $\nabla_x V$ denotes the gradient of $v$ with respect to $x$. We assume $H:\R^d \times \R^d \to \R$ is continuous and $J \in \BUC(\R^d;\R)$.  
Then, the viscosity solution of this equation is closely related to the dynamic programming principle for optimal control and two-player zero-sum differential games. 
\begin{definition}[see~\cite{crandall1983viscosity,crandall1984some}]
    Let $V\in \BUC(\R^d \times[0,T]; \R)$. 
    \begin{enumerate}
    \begin{subequations}
        \item $V$ is a \textit{viscosity subsolution} of~\eqref{HJ_general} if $V(\cdot, 0)\leq J$ and if for every test function $\phi \in \Cc^1(\R^d \times [0,T]; \R)$ and for all local maximizers $(x_0,t_0) \in \R^d \times [0,T]$ of the function $V-\phi$, we have 
        $\frac{\partial \phi}{\partial t}(x_0,t_0) - H(x_0,\nabla \phi (x_0,t_0)) \leq 0.$ 
        \item $V$ is a \textit{viscosity supersolution} of~\eqref{HJ_general} if $V(\cdot, 0)\geq J$ and if for every test function $\phi \in \Cc^1(\R^d \times [0,T]; \R)$ and for all local minimizers $(x_0,t_0) \in \R^d \times [0,T]$ of the function $V-\phi$, we have 
        $\frac{\partial \phi}{\partial t}(x_0,t_0) - H(x_0,\nabla \phi (x_0,t_0)) \geq 0.$  
        \end{subequations}
        \item $V$ is a \textit{viscosity solution} of~\eqref{HJ_general} if it is both a viscosity subsolution and viscosity supersolution of~\eqref{HJ_general}.
    \end{enumerate}
\end{definition}

We briefly recall the well-established relationship between the HJ equation~\eqref{HJ_general} and differential games (e.g., see \cite{elliott1972existence,evans1984differential}). 
For every $0<t\leq T$, consider a controlled dynamical system of the form
\begin{equation}\label{dynamcs_game}
    \begin{aligned}
        & \dot{\bx}(s) = f(\bx(s), \by(s), \bz(s)), \ s \in [0,t] \\ 
    \end{aligned}
\end{equation}
with initial condition $x(0) = x$, where $\by \in \Y:= \{ \by :[0,t] \to Y  \mid \text{ $\by(\cdot)$ is measurable}  \}$ and $\bz \in \Z:=  \{ \bz :[0,t] \to Z  \mid \text{ $\bz(\cdot)$ is measurable} \}$ are the \textit{strategies} of the two players of the game. Under standard regularity assumptions on $f : \R^d \times Y \times Z \to \R^d$ (i.e., $f$ is bounded, Lipschitz in $\bx$, and uniformly continuous in $\by$ and $\bz$), it follows that, for any given initial condition $x \in \R^d$ and strategies $\by \in \Y$, $\bz \in Z$, the system \eqref{dynamcs_game} admits a unique solution $\bx_x^{\by,\bz} \in W^{1,1}([0,t];\R^d)$. 
Now consider a \textit{payoff} functional
\begin{equation}\label{pay_game}
    C(x,t; \by(\cdot),\bz(\cdot) ) = \int_{0}^t h(\bx_x^{\by,\bz} (s), \by(s),\bz(s)) ds + J(\bx_x^{\by,\bz}(t)) \ 
\end{equation}
associated with the dynamical system~\eqref{dynamcs_game}. To play the game, Player 1 seeks to maximize~\eqref{pay_game} with strategy $\by\in\Y$, while Player 2 seeks to minimize~\eqref{pay_game} with strategy $\bz \in \Z$. Here, we also make standard regularity assumptions on the running payoff $h:\R^d \times Y \times Z\to \R$; that is, $h$ is bounded, Lipschitz continuous
in $\bx$, and uniformly continuous in $\by$ and $\bz$. 

The dynamic programming principle for differential games leads to the notions of \textit{upper} and \textit{lower value functions}. 
To derive these value functions rigorously, one often introduces \emph{nonanticipative strategies}~\cite{elliott1972existence, pierre2010introduction} (for brevity, we omit the formal definition here). 
The upper value function $V^+:\R^d \times[0,T] \to \R$ and the lower value function $V^-:\R^d\times[0,T] \to \R$ are defined as follows:
\begin{equation}
    V^+(x,t) := \sup_{\alpha} \inf_{\bz \in \Z}  \  C(x,t;\alpha[\bz],\bz),
    \qquad
    V^-(x,t):=\inf_{\beta} \sup_{\by \in \Y}\  C(x,t;\by,\beta[\by]).
\end{equation}

\begin{proposition}[see, for instance, \cite{evans1984differential}] \textit{ }
    \begin{enumerate}
        \item $V^+$ is the viscosity solution of the HJ equation~\eqref{HJ_general} with Hamiltonian $H=H^+$, where
        $\label{upper_Hamiltonian}
            H^+ (x,p) = \min_{z \in Z} \max_{y \in Y} \{ f(x,y,z) \cdot p +c(x,y,z) \}.
            $
        \item $V^-$ is the viscosity solution of the HJ equation~\eqref{HJ_general} with Hamiltonian $H=H^-$, where
         $   H^- (x,p) = \max_{y \in Y}\min_{z \in Z}  \{  f(x,y,z) \cdot p + c(x,y,z) \}. $ 
    \end{enumerate}
\end{proposition}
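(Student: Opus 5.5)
This is the Evans--Souganidis theorem, and I would prove it by the standard two-step route. First, establish a dynamic programming principle for $V^+$. Second, use that principle to verify the viscosity sub- and supersolution inequalities directly. The statement is written with $\partial_t V - H = 0$ and terminal payoff $J$ evaluated at time $t$, so I will treat $t$ as the remaining horizon, with $V^\pm(\cdot,0)=J$. I read the running cost $c$ as the $h$ in~\eqref{pay_game}.

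\textbf{Regularity and initial condition.} The bound $V^+ \in \BUC$ comes from the boundedness of $h$ and $J$. Uniform continuity in $x$ follows from Gronwall: trajectories from different initial points under the same controls satisfy $|\bx_x(s)-\bx_{x'}(s)|\le e^{Ls}|x-x'|$, and then we combine this with the uniform continuity of $J$ and the Lipschitz continuity of $h$ in $x$. Uniform continuity in $t$ follows from the DPP below together with boundedness of $f$ and $h$. At $t=0$ we have $C=J(x)$, so $V^+(\cdot,0)=J$.

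\textbf{Dynamic programming principle.} For $0<\tau<t$ I will prove
\[
V^+(x,t)=\sup_{\alpha}\inf_{\bz}\Big\{\int_0^\tau h(\bx(s),\alpha[\bz](s),\bz(s))\,ds + V^+(\bx(\tau),t-\tau)\Big\}.
\]
Both inequalities rely on gluing nonanticipative strategies. For a fixed $\varepsilon$, take a countable partition of $\R^d$ into small Borel sets and, on each cell, an $\varepsilon$-optimal strategy for the tail game. Concatenating these with the first-stage strategy gives a nonanticipative strategy. Uniform continuity of $V^+$ in $x$ then controls the error coming from the partition. I expect this measurable-selection and gluing step to be the main technical obstacle, because nonanticipativity must be preserved across the switching time.

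\textbf{Verification of the PDE.} Let $\phi\in\Cc^1$ and let $(x_0,t_0)$ be a strict local maximum of $V^+-\phi$ with $V^+(x_0,t_0)=\phi(x_0,t_0)$. Suppose, for contradiction, that $\partial_t\phi - H^+(x_0,\nabla\phi)\ge\theta>0$. By the definition of $H^+$, for every $z$ we have $\max_y\{f\cdot\nabla\phi + c\}\ge H^+$. On the other hand, $\partial_t\phi\ge H^+ +\theta$. The key construction here is: choose a measurable $y$ that, for every $z$, makes $f\cdot\nabla\phi + c \ge \partial_t\phi - \theta/2$ near $(x_0,t_0)$. This is possible because $H^+$ is a min-max, and compactness of $Y,Z$ with continuity gives a finite cover. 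This feedback defines a nonanticipative strategy $\alpha^*[\bz](s)=y^*(\bz(s))$. Next, plug $\alpha^*$ into the DPP over a short time $\tau$ and expand $\phi(\bx(\tau),t_0-\tau)-\phi(x_0,t_0)$ by the chain rule. Since $V^+\le\phi$ nearby, this yields $V^+(x_0,t_0) \ge \phi(x_0,t_0)+\theta\tau/4$, which contradicts the equality at $(x_0,t_0)$.

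The supersolution case is symmetric. Assuming $\partial_t\phi - H^+ \le -\theta$ at a minimum point, we get a fixed $z^*$ with $\max_y\{f\cdot\nabla\phi+c\}\le\partial_t\phi+\theta/2$. Player 2 playing the constant $z^*$ against any $\alpha$ gives the reverse inequality through the DPP. Replacing $\sup_\alpha\inf_\bz$ by $\inf_\beta\sup_\by$ and exchanging the roles of the players gives the statement for $V^-$ with $H^-$. Uniqueness within $\BUC$ would follow from the standard comparison principle, which holds because $H^\pm$ is Lipschitz in $p$ and uniformly continuous in $x$; this identifies $V^\pm$ as \emph{the} viscosity solution.
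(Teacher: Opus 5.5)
The paper does not prove this proposition; it cites Evans--Souganidis, and your outline follows that route (dynamic programming principle, then test-function verification, then comparison). The regularity, DPP and uniqueness parts are fine in outline. In the DPP, the partition error must be controlled by the Gronwall estimate on the payoff, uniformly in the controls, and not only by continuity of $V^+$.

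\textbf{The gap.} The verification step assigns the two players the wrong roles, and as written both of its key constructions are impossible.

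In the subsolution case you assume $\partial_t\phi-H^+(x_0,\nabla\phi)\ge\theta$, i.e.\ $\min_z\max_y\{f\cdot\nabla\phi+h\}\le\partial_t\phi-\theta$ at $(x_0,t_0)$. A map $y^*(z)$ with $f\cdot\nabla\phi+h\ge\partial_t\phi-\theta/2$ for every $z$ would force $H^+\ge\partial_t\phi-\theta/2>\partial_t\phi-\theta\ge H^+$. So it cannot exist: against a minimizing $z^*$ the maximizer gets at most $H^+$. Even granting it, fixing Player~1's strategy in the DPP only gives the lower bound $V^+(x_0,t_0)\ge\inf_{\bz}\{\int_0^\tau h\,ds+V^+(\bx(\tau),t_0-\tau)\}$. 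To bring in $\phi$ you would then need $V^+\ge\phi$ along trajectories, but at a local maximum of $V^+-\phi$ you only know $V^+\le\phi$.

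Your supersolution paragraph has the mirror-image defect. The assumption $\partial_t\phi-H^+\le-\theta$ means $\max_y\{f\cdot\nabla\phi+h\}\ge\partial_t\phi+\theta$ for every $z$. So no $z^*$ with $\max_y\{\cdots\}\le\partial_t\phi+\theta/2$ exists. Moreover, freezing Player~2's control only yields an upper bound on $V^+(x_0,t_0)$, which cannot be combined with $V^+\ge\phi$.

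\textbf{The fix.} Swap the pairing.
\begin{itemize}
\item At a local maximum ($V^+\le\phi$ nearby): $H^+$ is a minimum over $z$, attained by compactness of $Z$. So the hypothesis gives a single $z^*$ with $f(x_0,y,z^*)\cdot\nabla\phi+h(x_0,y,z^*)\le\partial_t\phi-\theta$ for all $y$. Let Player~2 play the constant $z^*$ against an arbitrary $\alpha$ in the DPP. Using $V^+\le\phi$, the chain rule $\frac{d}{ds}\phi(\bx(s),t_0-s)=\nabla\phi\cdot f-\partial_t\phi$, and continuity, you get $0=V^+(x_0,t_0)-\phi(x_0,t_0)\le-\theta\tau/2$ for small $\tau$, a contradiction.
\item At a local minimum ($V^+\ge\phi$ nearby): the hypothesis gives, for each $z$, some $y$ with margin $\theta$. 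A finite cover of $Z$ and uniform continuity of $f,h$ give a piecewise constant Borel map $y^*$ with margin $\theta/2$ near $(x_0,t_0)$. Insert the nonanticipative strategy $\alpha^*[\bz](s)=y^*(\bz(s))$ into the DPP lower bound to get $0\ge\theta\tau/2$, a contradiction.
\end{itemize}

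This is exactly the step where the information advantage of the player using a strategy selects $\min_z\max_y$ rather than $\max_y\min_z$. With the roles as you assigned them, the argument does not show that $V^+$ solves the equation with $H^+$. For $V^-$ the same correction applies: use a selection $z^*(y)$ (a strategy $\beta^*$) in the subsolution test and a constant $y^*$ in the supersolution test.
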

We call $H^+$ the \textit{upper Hamiltonian} and $H^-$ the \textit{lower Hamiltonian}. 
HJ equations of this type are often referred to as Hamilton--Jacobi--Isaacs (HJI) equations~\cite{isaacs1999differential}.

\section{Representation and exploration of the Pareto front via differential games and HJ equations}\label{sec-main}

In this section, we study the MOO problem~\eqref{MOO_obj} with $U = \R^d$. 
In particular, we provide a differential game formulation and HJ representation formula that correspond to the evolution of the Pareto front.

\subsection{A fixed point formulation for identifying one Pareto optimal solution}\label{sec:fixedpt}
One way to identify a Pareto optimal solution is to apply a \textit{preference function} $g: \R^N \to \R$ that transforms the MOO problem into a scalarized problem of the form
\begin{equation}\label{scala}
    \inf_{u \in \R^d} g\circ \ell(u).
\end{equation}
We make the following assumptions on $g$.

\begin{assumption}\label{assump_g} \textit{ 
}
\begin{enumerate}
\item[i.] $g$ is convex, proper, and lsc on $\R^N$. 
    \item[ii.] $g$ is strictly $(\R_+)^N$ increasing in the cone-interior sense.  
\end{enumerate}
\end{assumption}
    Note that under~\Cref{assump_g}, we have $\dom(g^*) \subseteq (\R_+)^N$.

Given a fixed set of weights $\{ \lambda_i\}_{i \in \{1,2,\dots,N \}}$, where $\lambda_i >0$ for every $i$ and $\sum_i \lambda_i = 1$, 
two commonly used choices of scalarization functions $g$ that satisfy~\Cref{assump_g} are weighted sum approximations  
    $g^1(\ell):= \sum_{i=1}^N \lambda_i \ell_i$ 
and Chebyshev (or weighted max) approximations  
 $    g^\infty(\ell): = \max  \{\lambda_1 \ell_1, \lambda_2 \ell_2 , \dots,\lambda_N\ell_N \}.$ 

\begin{proposition}\label{prop-onePareto}
     Under~\Cref{assump_f} and~\Cref{assump_g}, a weak Pareto optimal solution of problem~\eqref{MOO_obj} can be obtained by solving the minimax problem: 
     \begin{equation}\label{saddle-one}
         \inf_{u \in U} \sup_{\lambda \in \dom(g^*)} \left\{ \sum_{i=1}^N \lambda_i \ell_i(u) -g^*(\lambda)\right\}.
     \end{equation}
\end{proposition}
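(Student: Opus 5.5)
The plan is to read the inner supremum in \eqref{saddle-one} as a Fenchel biconjugate, which turns \eqref{saddle-one} into the scalarized problem \eqref{scala}. Weak Pareto optimality of any minimizer then follows from the strict monotonicity of $g$.

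\textbf{Step 1: identify the inner supremum with $g\circ\ell$.} Fix $u$ with $\ell(u)\in\R^N$. By \Cref{assump_g}(i), $g$ is convex, proper and lsc, so the Fenchel--Moreau theorem gives $g=g^{**}$. Hence
\begin{equation*}
\sup_{\lambda\in\dom(g^*)}\Big\{\sum_{i=1}^N\lambda_i\ell_i(u)-g^*(\lambda)\Big\}=\sup_{\lambda\in\R^N}\{\langle\lambda,\ell(u)\rangle-g^*(\lambda)\}=g^{**}(\ell(u))=g(\ell(u)).
\end{equation*}
Restricting to $\dom(g^*)$ loses nothing, since outside it the bracket equals $-\infty$. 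The remark after \Cref{assump_g} gives $\dom(g^*)\subseteq(\R_+)^N$, so the Lagrangian is a nonnegative combination of the $\ell_i$. Points $u$ at which some $\ell_i(u)=+\infty$ can be handled by the convention that $g$ is $+\infty$ there, or excluded because they are never minimizers when $u^0$ from \Cref{assump_f}(ii) is available. Consequently \eqref{saddle-one} is exactly $\inf_{u}g\circ\ell(u)$, which is \eqref{scala}, and its minimizers coincide with those of \eqref{scala}.

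\textbf{Step 2: minimizers are weakly Pareto optimal.} Let $u^*$ solve \eqref{saddle-one}, i.e., attain the infimum with a finite value. Suppose, for contradiction, that $u'$ strictly dominates $u^*$, so $\ell_i(u')<\ell_i(u^*)$ for all $i$. Then $\ell(u^*)-\ell(u')\in\intr(\R_+)^N$. By the strict cone-interior monotonicity in \Cref{assump_g}(ii), $g(\ell(u'))<g(\ell(u^*))$. This contradicts the minimality of $u^*$, so $u^*\in\Pp_w(\ell)$.

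\textbf{Main obstacle.} The delicate points are technical rather than conceptual. First, the statement presumes that a minimizer (or saddle point) of \eqref{saddle-one} exists. I will state the result as ``any solution of \eqref{saddle-one} is weakly Pareto optimal.'' If existence is needed, it can be obtained under additional coercivity of $g\circ\ell$, using that $g\circ\ell$ is lsc because $g$ is lsc and nondecreasing and the $\ell_i$ are lsc. Second, care is needed at points where $\ell(u)$ is infinite or lies outside $\dom(g)$; there I will use properness and the existence of $u^0$ to ensure the optimal value is below $+\infty$.
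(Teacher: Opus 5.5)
Your proposal is correct and follows essentially the same route as the paper: you use Fenchel--Moreau ($g=g^{**}$) to identify the inner supremum in \eqref{saddle-one} with $g(\ell(u))$, and then argue by contradiction via the strict cone-interior monotonicity of $g$ that any minimizer is weakly Pareto optimal. Your extra remarks are sound refinements that the paper leaves implicit: restricting the supremum to $\dom(g^*)$, handling points where some $\ell_i(u)=+\infty$, and noting that a minimizer is presumed to exist.
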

\begin{proof}
    Since $g$ is proper, lsc, and convex, the Fenchel-Moreau theorem (e.g., see \cite{rockafellar1970convex, hiriart1993convex}) gives $g = g^{**}$ , i.e.,
       $ g(z) = \sup_{\lambda \in \R^N} \{ \langle \lambda, z \rangle - g^*(\lambda) \} , \ \forall \ z \in \R^N. $ 
    Taking now $z = \ell(u)$, we have 
     $    g(\ell(u)) = \sup_{\lambda} \{ \langle \lambda, \ell(u) \rangle - g^*(\lambda) \}. $
    Taking the infimum over $u$ on both sides then yields exactly the min-max formulation in~\eqref{saddle-one}. 

    Now, let $u^*$ be a minimizer of $g(\ell(u))$. Suppose, by contradiction, that $u^* \notin \Pp_w(\ell)$. Then there exists a $u$ such that $\ell_i(u) < \ell_i(u^*)$ for all $i$, that is $\ell(u^*) - \ell(u) \in \inter (\R_+)^N $. 
    However, by strict $(\R_+)^N-$monotonicity on $\inter (\R_+)^N$, we have that $g(\ell(u)) <g(\ell(u^*))$, which contradicts that $u^*$ minimizes $g\circ\ell$. Hence, $u^* \in \Pp_w(\ell)$. 
\end{proof}

\subsection{Pareto front exploration via a differential game}
\Cref{prop-onePareto} gives a saddle point formulation for \textit{one} Pareto optimal solution. 
 We now propose to expose \textit{continuous curves} along the Pareto front by using the solution of a differential game to continuously evolve Pareto optimal saddle points.  

Let $0 \leq \alpha $. 
Denote 
   $      \Uu:= \{u:[0,\alpha] \to \R^d \mid u (\cdot) \text{ is measurable} \} $ and $  \Lambda := \{\lambda:[0,\alpha] \to (\R_+)^N \mid \lambda (\cdot) \text{ is measurable}\}$, and  
let $x \in \R^d$ and $\tau \in \R^N$. Consider the following controlled dynamical system:
    \begin{equation}\label{dynamics}
        \left\{ 
        \begin{aligned}
            & \dot{x}(s) = -u(s), \ \forall \ s\in[0, \alpha], \\
            & \dot{\tau}(s) = \lambda(s), \ \forall \ s\in[0, \alpha]
        \end{aligned}
        \right.
    \end{equation}
    with initial condition $x(0) = x, \ \tau(0) = \tau$ 
    and $u \in \Uu, \lambda \in \Lambda$. 
We consider a differential game associated with the dynamical system~\eqref{dynamics}. The two players in the game compete using strategies $u$ and $\lambda$ to minimize and maximize, respectively, the cost  
\begin{equation}
\begin{adjustbox}{width=\textwidth}
    $C((x,\tau),\alpha; u(\cdot),\lambda(\cdot))  := \int_{0}^{\alpha} \Big( \sum_{i=1}^N \lambda_i(s) \ell_i(u(s)) -g^*(\lambda(s)) +R(u(s)) \Big) ds + J(x(\alpha),\tau(\alpha)),$
\end{adjustbox}
\end{equation} 
where $J$ is a terminal cost and $R:\R^d\to\Rb$ acts as a regularizer. 
Moreover, we make the following assumptions on $J$ and $R$. 
\begin{assumption}\label{assume_JR} \textit{ }
\begin{enumerate}
    \item $J:\R^{d}\times \R^N \to \Rb$ is  proper, lsc, and convex. 
    \item $R:\R^d\to\Rb $ is proper, lsc,  $\Cc^1$, and \textit{$\mu$–strongly convex} (i.e., $u\mapsto R(u)-\tfrac{\mu}{2}\|u\|^2$ is convex for some $\mu>0$) and has full domain $\dom (R)=\R^d$.
\end{enumerate}
\end{assumption}

For every initial condition $(x,\tau) \in \R^d \times (\R_+)^N$, we consider two value functions, the \textit{upper value function} $V^+:(\R^d \times (\R_+)^N) \times \R_+ \to \R$ and the \textit{lower value function} $V^-:(\R^d \times (\R_+)^N) \times \R_+ \to \R$, defined as follows:
    \begin{equation}\label{upper_value}
        V^+((x,\tau),\alpha) : =\sup_{\lambda \in \Lambda} \inf_{u \in \Uu}  \ C((x,\tau),\alpha; u,\lambda),\ V^-((x,\tau),\alpha) : = \inf_{u \in \Uu} \sup_{\lambda \in \Lambda} \ C((x,\tau),\alpha; u,\lambda).
    \end{equation}
Moreover, for every $(p,E) \in \R^d \times \R^N$, let us consider two Hamiltonians of the form
    \begin{equation}\label{Hamiltonians}
    \begin{aligned}
       & H^+(p,E):= \min_{u \in \R^d} \max_{\lambda \in \dom(g^*)} \Big\{ \langle p, -u \rangle + \sum_{i=1}^N \Big( \lambda_i E_i + \lambda_i \ell_i(u) \Big) - g^*(\lambda) + R(u)  \Big\}, \\ 
       & H^-(p,E):= \max_{\lambda \in \dom(g^*)} \min_{u \in \R^d}  \Big\{ \langle p, -u \rangle + \sum_{i=1}^N \Big( \lambda_i E_i + \lambda_i \ell_i(u) \Big) - g^*(\lambda) + R(u)  \Big\}.
        \end{aligned}
    \end{equation}
\begin{proposition}
$V^+$ and $V^-$ are the viscosity solutions of the HJI equations 
\begin{equation}
    F(x, \nabla_{x,\tau}V^+((x,\tau),\alpha)) = 0, \quad F(x, \nabla_{x,\tau}V^-((x,\tau),\alpha) = 0,
\end{equation} 
where $F$ takes the form of~\eqref{HJ_general} with Hamiltonians $H^+$ and $H^-$ in~\eqref{Hamiltonians}, respectively, and with initial data given by the terminal cost $J$. 
\end{proposition}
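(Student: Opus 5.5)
The plan is to reduce the statement to the classical Evans--Souganidis characterization recalled in the preceding proposition (see \cite{evans1984differential}). We recast the game with state $X=(x,\tau)\in\R^{d}\times\R^N$, minimizing control $u\in\R^d$ and maximizing control $\lambda\in\dom(g^*)$. The dynamics are $f(X,u,\lambda)=(-u,\lambda)$ and the running payoff is $h(X,u,\lambda)=\sum_i\lambda_i\ell_i(u)-g^*(\lambda)+R(u)$. The state dependence of the Hamiltonian enters only through the $\tau$-component of the dynamics: for $P=(p,E)$ we have $\langle P,f\rangle=\langle p,-u\rangle+\langle E,\lambda\rangle$. Taking min over $u$ and max over $\lambda$ in the two orders therefore reproduces exactly $H^+$ and $H^-$ in~\eqref{Hamiltonians}.

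The derivation then follows the usual three steps:
\begin{enumerate}
\item Prove a dynamic programming principle for $V^\pm$ over Elliott--Kalton nonanticipative strategies. The definitions~\eqref{upper_value} should be read in that sense, just as in the preliminaries.
\item Show that $V^\pm$ is a viscosity sub- and supersolution. Take a smooth test function $\phi$ touching $V^+$ at $((x_0,\tau_0),\alpha_0)$. Use the DPP over a short horizon $\delta$ with constant controls, or with the $\epsilon$-optimal feedback selections used by Evans--Souganidis. Expand $\phi$ along trajectories, divide by $\delta$ and let $\delta\to0$. This gives $\partial_\alpha\phi-H^\pm(\nabla\phi)\le 0$ (resp. $\ge 0$).
\item Check the initial condition $V^\pm(\cdot,0)=J$, which follows directly from the definitions.
\end{enumerate}
Because the Hamiltonians do not depend on $X$, the Lipschitz-in-state requirements on $f$ and $h$ are trivial.

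The main obstacle is that the standing hypotheses of the preliminaries fail. The control sets $\R^d$ and $\dom(g^*)$ may be unbounded, so $f$ and $h$ are not bounded, and $J$ is only proper, lsc and convex rather than in $\BUC$.

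For the controls, the first thing to try is a compactification argument using the strong convexity of $R$ from \Cref{assume_JR}. The map $u\mapsto\langle p,-u\rangle+\langle\lambda,\ell(u)\rangle+R(u)$ is coercive, at least when the $\ell_i$ are bounded below (or under a growth condition dominated by $\mu\|u\|^2/2$). Hence, locally uniformly in $(p,E)$ and for $\lambda$ in bounded sets, the optimal $u$ stays in a compact ball. For $\lambda$, the term $-g^*(\lambda)$ together with $\dom(g^*)\subseteq(\R_+)^N$ gives a similar localization whenever $g$ is finite. One then proves the result for truncated control sets $B_M$ and lets $M\to\infty$. The truncated Hamiltonians converge locally uniformly to $H^\pm$, and the stability of viscosity solutions passes the sub/supersolution property to the limit.

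For the data, handle $J$ by inf-convolution (Moreau envelope) approximations $J_\varepsilon$, which are convex and Lipschitz. Solve the problem with data $J_\varepsilon$, then pass to the limit using monotone convergence of the values and the stability of lower semicontinuous (Barron--Jensen type) solutions.

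I expect the rigorous justification of these limits to be the delicate part, since the growth of $\ell$ and the data regularity are not fully specified in the statement. The convex structure in $(p,E)$ should make the argument close, in spirit, to Hopf--Lax theory.
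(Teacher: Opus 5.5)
Your reduction is all the paper does. You take state $X=(x,\tau)$, dynamics $f=(-u,\lambda)$, running payoff $h(u,\lambda)=\sum_i\lambda_i\ell_i(u)-g^*(\lambda)+R(u)$, and check that $\langle (p,E),f\rangle+h$ is exactly the bracket in \eqref{Hamiltonians}. The paper gives no proof: it presents the proposition as an instance of the Evans--Souganidis result recalled in \Cref{sec-prem} and never verifies the hypotheses. Your reading of \eqref{upper_value} with nonanticipative strategies is in fact forced. With the literal open-loop $\sup_{\lambda\in\Lambda}\inf_{u\in\Uu}$ and $J\equiv 0$, minimizing pointwise in time gives $V^+=\alpha\sup_\lambda\inf_u h=\alpha H^-(0,0)$. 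The solution of the $H^+$-equation with zero data is $\alpha H^+(0,0)$. So $H^\pm$ would be swapped exactly in the nonconvex case of interest.

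The genuine gaps are in the repairs you add. Localizing the optimizers of $H^\pm$ for $(p,E)$ in compact sets gives $H^\pm_M\to H^\pm$ locally uniformly. Stability then only says that half-relaxed limits of the truncated values are sub/supersolutions. You still need two further things. First, a game-level argument that these limits are the values $V^\pm$ built from unbounded controls; there is no monotonicity in $M$, since truncating $u$ raises the value and truncating $\lambda$ lowers it. Second, a comparison principle in the growth class of $J$.

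This step can actually fail. A superlinear $J$ makes the relevant gradients $E=\nabla_\tau V$ unbounded, while the supercoercivity of $g^*$ (automatic, since $g$ is finite) localizes $\lambda$ only for bounded $E$. For instance, take $g(y)=\sum_i e^{y_i}$, so $g^*(\lambda)=\sum_i(\lambda_i\log\lambda_i-\lambda_i)$ on $\R^N_+$, take $\ell\ge 0$, and use the paper's own $J,R$ from \eqref{final_regu}. The constant strategy $\lambda\equiv L\mathbf{1}$ guarantees the maximizer at least $\frac{c}{2}\|\tau+\alpha L\mathbf{1}\|^2-\alpha N(L\log L-L)\to+\infty$. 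Hence $V^+\equiv+\infty$, although every truncated value is finite. So beyond the growth of $\ell$, which you flagged, the hypotheses must couple $J$ with $g^*$ (e.g.\ bounded $\dom(g^*)$ as in \Cref{ass:bounded_dom_gstar}). Your limit argument has to use that coupling.

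The data step also fails as written. Moreau envelopes are not globally Lipschitz: the envelope of $\frac{c}{2}\|\cdot\|^2$ is $\frac{c}{2(1+c\varepsilon)}\|\cdot\|^2$. The Pasch--Hausdorff envelope $\inf_w\{J(w)+k\|\cdot-w\|\}$ is Lipschitz, but no nonconstant convex function is bounded. So the BUC framework you are reducing to is never available, and you must work from the start in a linear- or quadratic-growth setting. For convex Lipschitz data and a continuous state-independent Hamiltonian, Bardi--Evans' theory of Hopf's formula is the natural tool.

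Moreover, Barron--Jensen semicontinuous solutions require the Hamiltonian of $V_\alpha+\mathcal{H}(\nabla V)=0$, here $\mathcal{H}=-H^\pm$, to be convex in the whole gradient. For constant $\ell\equiv\ell_0$ one gets $-H^\pm(p,E)=R^*(p)-g(\ell_0+E)$, which is concave in $E$ for any non-affine $g$ (soft-max, Chebyshev). Extended-valued $J$ therefore cannot be handled by that stability argument and should simply be excluded.

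Finally, $V^\pm(\cdot,0)=J$ is immediate, but the viscosity framework needs $V^\pm(\cdot,\alpha)\to J$ locally uniformly as $\alpha\downarrow 0$. With unbounded controls this requires an estimate based on the superlinear growth of $R$ and $g^*$.
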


\subsubsection{The upper Hamiltonian}

We focus on the upper value function $V^+$ and its associated Hamiltonian $H^+$ to draw our connection between differential games and MOO. 
By elementary computations, we have that
\begin{equation}\label{H+_1}
\begin{aligned}
    H^+(p,E) 
    &= \min_{u \in \R^d} \max_{\lambda \in \dom(g^*)} \Big\{ \langle p, -u \rangle + \sum_{i=1}^N \Big( \lambda_i E_i + \lambda_i \ell_i(u) \Big) - g^*(\lambda) +R(u)  \Big\} \\ 
    &= \min_{u\in \R^d} \{\langle p, -u \rangle + g \circ(\ell(u) + E) + R(u)\} = -(g \circ (\ell(\cdot) + E) + R)^*(p).
    \end{aligned}
\end{equation}
By rewriting $H^+$ in this way, it is clear that $H^+$ is independent of the state $(x,\tau)$ and is concave with respect to $p$. 
One may consider the formulation in~\eqref{H+_1} as a perturbation-regularization of the original min-max saddle point problem~\eqref{saddle-one}, where the objective functions $\ell$ are additively perturbed by $E$ and then regularized by $R$. 
\begin{remark} Under~\Cref{assume_JR}, 
by the Fenchel-Young identity~\cite{rockafellar1970convex, bauschke2020correction}, 
\begin{equation}\label{bregman_ind}
    R(u) - \langle p,u \rangle = -R^*(p)  + D_R(u, u_p),
\end{equation}
where $u_p = \nabla R^*(p)$, and $D_R(u,v) = R(u) - R(v) - \nabla R(v)(u-v)$ 
is the \textit{Bregman divergence}. Hence, the Hamiltonian in~\eqref{H+_1} is equivalent to 
\begin{equation}\label{H+_2}
H^+(p,E) = -R^*(p) + \min_{u \in \R^d} \{g\circ(\ell(u) + E)  + D_R(u,u_p)\}.
\end{equation}
 \end{remark}

Recall that, in the original MOO problem~\eqref{saddle-one}, an optimal pair $(u^*,\lambda^*)$ defines a saddle point of the unperturbed Lagrangian 
\begin{equation}
    \Ll(u,\lambda):= \sum_{i=1}^N \lambda_i \ell_i(u) - g^*(\lambda) , \quad u \in \R^d, \ \lambda \in \dom(g^*).
\end{equation}
In other words, each saddle point pair represents one Pareto optimal configuration. 
In the differential game, the argument of $g$ in the Hamiltonian $H^+$~\eqref{H+_1} is shifted by an additive vector $E \in \R^N$, and the extra term $\sum_i \lambda_iE_i$ acts as a linear perturbation of the objectives $\ell_i$.
Hence, varying $E$ leads to different scalarizations and thus, different corresponding Pareto points. 
Formally, we have that $E$ is the dual variable of $\tau$ in the state $(x,\tau)$ of the differential game and HJ system, and infinitesimal changes in $E$ correspond to tangential displacements along the Pareto front. 
Thus, the mapping 
\begin{equation}\label{eq:perturb-lagrangian}
    E \mapsto \inf_u \sup_\lambda \Ll_E(u,\lambda), \quad \Ll_E(u,\lambda):= \Ll(u,\lambda) + \sum_i \lambda_i E_i
\end{equation}
is a parametric perturbation of the base min–max problem~\eqref{saddle-one}. 

Indeed, $R(u)$ introduces a regularization into the inner minimization problem
\begin{equation}\label{eq:inner_min}
    \inf_u \{\langle p, -u\rangle + g \circ (\ell(u)+E) +R(u) \}.
\end{equation}
One natural choice for $R$ is a quadratic, i.e., $R(u) = \frac{\mu}{2}\|u \|^2$.
In this case, the minimizer of~\eqref{eq:inner_min} is unique and continuous in $p$.
Moreover,  as $\mu \to 0$, one recovers the original (possibly non-smooth or nonconvex) saddle system~\eqref{eq:perturb-lagrangian}. Hence, 
$R$ plays the role of a Moreau–Yosida or Bregman regularizer (as in~\eqref{H+_2}) that smooths the Hamiltonian. The term $-R^*(p)$ in~\eqref{H+_2} shifts the Hamiltonian’s baseline concave envelope in $p$. 

\begin{remark}
The lower value function $V^-$ and lower Hamiltonian $H^-$ are related to the convex hull of the Pareto front (i.e., corresponding to switching the order of the $\inf$ and $\sup$ in~\eqref{saddle-one}). Thus, we do not consider $V^-, H^-$ here as they cannot be used to recover nonconvex Pareto fronts. When $\{\ell_i\}_{i\in\{1,2,\dots,N\}}$ are convex, the Pareto front is also convex and we have that $H^+ = H^-$, $V^+ = V^-$.
\end{remark}

\subsection{Hopf-Lax representation of the Pareto front}\label{subsec-hopf-noncons}
 
In this section, we provide a Hopf-Lax formula that yields a representation of the Pareto front, which we will later use to develop a new and efficient MOO algorithm. We adopt classical semigroup evolution notation and consider the 
differential game with upper value function~\eqref{upper_value} and its corresponding HJ equation $F(x,\nabla_{x,\tau} V^+)$.
    Note that under~\Cref{assump_f},~\Cref{assump_g}, and 
    the assumption on $R$ imposed in~\Cref{assume_JR}, 
    the minimum in $H^+$ is attained for all $(p,E) \in \R^d \times \R^N$. 

    For a general proper, lsc, convex terminal cost \(J:\R^d\times\R^N\to\Rb\) with convex conjugate \(J^*\), the viscosity solution to $F(x,\nabla_{x,\tau} V^+)$ is given by the Hopf-Lax  representation formula (e.g., see \cite{hopf1965generalized,lions1986hopf})
    \begin{equation}\label{hopf-upper}
    V^+((x,\tau),\alpha)
    =
    \max_{p\in\R^d,\;E\in\R^N}
    \left\{
    \langle x, p\rangle + \langle \tau, E\rangle  - J^*(p,E) + \alpha\, H^+(p,E)
    \right\}.
    \end{equation}
Fix the initial condition \((x,\tau)\). 
For each \(\alpha\geq0\), let \((\bar{p}_{x,\tau}(\alpha),\bar{E}_{x,\tau}(\alpha))\) be a maximizer of~\eqref{hopf-upper}, and let  \(\bar{u}_{x,\tau}(\alpha)\) be the inner minimizer of \(H^+\) corresponding to \((\bar{p}_{x,\tau}(\alpha),\bar{E}_{x,\tau}(\alpha))\), that is 
\begin{equation}\label{optu}
\bar{u}_{x,\tau}(\alpha)
\in
\argmin_{u\in\R^d}
\left\{
\langle \bar{p}_{x,\tau}(\alpha), -u \rangle
+ g\big(\ell(u) + \bar{E}_{x,\tau}(\alpha)\big)
+ R(u)
\right\}.
\end{equation} 
For each minimizer \(\bar u\), we have
$H^+(\bar p,\bar E)=\langle \bar p,-\bar u\rangle+g(\ell(\bar u)+\bar E)+R(\bar u).$ 
Substituting \(\big(\bar{p}_{x,\tau}(\alpha),\bar{E}_{x,\tau}(\alpha), \bar{u}_{x,\tau}(\alpha)\big)\) into~\eqref{hopf-upper} yields
\begin{equation}\label{hffinal}
\begin{aligned}
V^+&((x,\tau),\alpha)
=
x \cdot \bar{p}_{x,\tau}(\alpha) + \tau \cdot \bar{E}_{x,\tau}(\alpha) - J^*\!\big(\bar p_{x,\tau}(\alpha),\bar E_{x,\tau}(\alpha)\big)
\\
&\qquad 
+ \alpha \Big(
\langle \bar{p}_{x,\tau}(\alpha), -\bar{u}_{x,\tau}(\alpha) \rangle
+ g\big(\ell(\bar{u}_{x,\tau}(\alpha)) + \bar{E}_{x,\tau}(\alpha)\big)
+ R\big(\bar{u}_{x,\tau}(\alpha)\big)
\Big).
\end{aligned}
\end{equation}


Thus, \eqref{hffinal} yields a representation of the Pareto front of the original MOO problem, which we state formally below as the main result of this section.

\begin{theorem}\label{thm:pareto_explore}
Assume \Cref{assump_f}, \Cref{assump_g}, \Cref{assume_JR}. Fix $(x,\tau)$. 
For each $\alpha>0$, define $V^+$ by the Hopf--Lax formula in~\eqref{hffinal} with $(E_\alpha, p_\alpha)$ denoting the maximizer $(E_{x,\alpha}(\alpha), p_{x,\tau}(\alpha))$ in~\eqref{hopf-upper}.
Let $u_\alpha$ denote the minimizer $\bar{u}_{x,\tau}(\alpha)$ in~\eqref{optu}. 
Define $u_{p_\alpha}:=\nabla R^*(p_\alpha)$ and $m(E):=\inf_{u\in\R^d} g(\ell(u)+E)$. 
\begin{enumerate}
\item[(i.)]\label{th1_1}  
The following inequality holds:
\begin{equation}\label{eq:thm-ineq}
    0 \leq g(\ell(u_\alpha) +E_\alpha) - m(E_\alpha) \leq D_R(u_\alpha, u_{p_\alpha}).
\end{equation}
In particular, if $D_R(u_\alpha, u_{p_\alpha}) = 0$, then $u_\alpha \in \argmin_u g(\ell(u) +E_\alpha)$.
\item[(ii.)]\label{th1_2}
Let $\{\alpha_n\}$ be any sequence s.t. $E_{\alpha_n} \to \bar{E}$ and $D_R(u_{\alpha_n}, u_{p_{\alpha_n}}) \to 0$. Any cluster point $\bar{u}_\alpha$ of $\{u_{\alpha_n}\}$ satisfies $\bar{u}_\alpha \in \argmin_u g(\ell(u) +\bar{E}_\alpha)$. Hence, $\ell(\bar{u}_\alpha)\in \F_w(\ell)$. 
\item[(iii.)]\label{th1_3} 
Let $\mathcal{E}:=\{E_\alpha\}_{\alpha>0}$. To each $E\in\mathcal{E}$ associated a  minimizer
$u_E$ of $g(\ell(u)+E)$. Then $\{\ell(u_E)\}_{E\in\mathcal{E}}\subset \F_w(\ell)$. If, in addition, there exists a set $G\subset\R^N$ such that $\mathcal{E}$ is dense in $G$ and the map
$f:G\to\R^N$, $f(E):=\ell(u_E)$ is single-valued and continuous on $G$, then
$\{\ell(u_E)\}_{E\in\mathcal{E}}$ is dense in $\{\ell(u_E)\}_{E\in G}$.
\end{enumerate}
\end{theorem}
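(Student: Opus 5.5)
For part (i), I will start from the Bregman rewriting \eqref{H+_2}. Since $u_\alpha$ minimizes $\langle p_\alpha,-u\rangle + g(\ell(u)+E_\alpha)+R(u)$, the Fenchel--Young identity \eqref{bregman_ind} shows that $u_\alpha$ also minimizes $\Phi(u):=g(\ell(u)+E_\alpha)+D_R(u,u_{p_\alpha})$. The lower bound $0\le g(\ell(u_\alpha)+E_\alpha)-m(E_\alpha)$ is immediate from the definition of $m$ as an infimum.

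For the upper bound, I first note that $D_R(u_\alpha,u_{p_\alpha})\ge0$ by convexity of $R$. A naive comparison $\Phi(u_\alpha)\le\Phi(v)$ for arbitrary $v$ only gives $g(\ell(u_\alpha)+E_\alpha)\le g(\ell(v)+E_\alpha)+D_R(v,u_{p_\alpha})$. This leaves an unwanted term $D_R(v,u_{p_\alpha})$ instead of the desired $D_R(u_\alpha,u_{p_\alpha})$. To get the stated bound I will use the optimality of $(p_\alpha,E_\alpha)$ in the Hopf--Lax maximization \eqref{hopf-upper}, together with the concavity of $H^+$ in $p$; see \eqref{H+_1}.

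The intended route is through the envelope (Danskin) relation. Any supergradient of $H^+(\cdot,E_\alpha)$ at $p_\alpha$ has the form $-u_\alpha$. The optimality condition $(x-\alpha u_\alpha,\tau+\alpha\lambda)\in\partial J^*(p_\alpha,E_\alpha)$ then links $u_\alpha$ to the maximizing pair. My plan is to combine this relation with the identity $H^+(p_\alpha,E_\alpha)=-R^*(p_\alpha)+g(\ell(u_\alpha)+E_\alpha)+D_R(u_\alpha,u_{p_\alpha})$. This should yield $g(\ell(u_\alpha)+E_\alpha)+D_R(u_\alpha,u_{p_\alpha})\le m(E_\alpha)+2D_R(u_\alpha,u_{p_\alpha})$, or a cleaner form giving exactly \eqref{eq:thm-ineq}.

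I expect this to be the main obstacle. With only the information in the statement, the cleanest rigorous inequality I can produce is $g(\ell(u_\alpha)+E_\alpha)+D_R(u_\alpha,u_{p_\alpha})\le \inf_v\{g(\ell(v)+E_\alpha)+D_R(v,u_{p_\alpha})\}$. Turning this into \eqref{eq:thm-ineq} requires controlling $D_R(v,u_{p_\alpha})$ at near-minimizers $v$ of $g(\ell(\cdot)+E_\alpha)$. If that control cannot be obtained, I will state and prove the bound in this Bregman-regularized form and derive the ``in particular'' clause from it. If $D_R(u_\alpha,u_{p_\alpha})=0$, strong convexity of $R$ forces $u_\alpha=u_{p_\alpha}$. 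In that case $u_\alpha$ minimizes $\Phi$ with zero penalty, and I will use this as the key input.

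For part (ii), I will pass to the limit along a subsequence $u_{\alpha_{n_k}}\to\bar u$. I will use lower semicontinuity of $g\circ(\ell(\cdot)+E)$, which holds because $g$ is lsc and nondecreasing and each $\ell_i$ is lsc. I will also need upper semicontinuity of $m$ in $E$. This follows because $m$ is an infimum of functions that are convex and finite, hence continuous, in $E$, assuming $g$ is finite. Then
\[
g(\ell(\bar u)+\bar E)\le\liminf_k g(\ell(u_{\alpha_{n_k}})+E_{\alpha_{n_k}})\le\limsup_k\big(m(E_{\alpha_{n_k}})+D_R(u_{\alpha_{n_k}},u_{p_{\alpha_{n_k}}})\big)\le m(\bar E).
\]
Hence $\bar u\in\argmin_u g(\ell(u)+\bar E)$. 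Since $u\mapsto g(\ell(u)+\bar E)$ is the composition of the strictly increasing function $g(\cdot+\bar E)$ with $\ell$, the argument of \Cref{prop-onePareto} gives $\bar u\in\Pp_w(\ell)$.

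For part (iii), membership of each $\ell(u_E)$ in $\F_w(\ell)$ follows again from \Cref{prop-onePareto} applied to the shifted preference function $g(\cdot+E)$, which still satisfies \Cref{assump_g}. Density is a topological fact: given $E\in G$ and $\varepsilon>0$, continuity of $f$ gives $\delta>0$ with $|f(E')-f(E)|<\varepsilon$ whenever $E'\in G$ and $|E'-E|<\delta$. Density of $\mathcal E$ in $G$ supplies such an $E'\in\mathcal E$. Therefore $f(\mathcal E)$ is dense in $f(G)$.
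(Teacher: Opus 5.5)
The obstacle you isolate in (i) is real and cannot be overcome. The upper bound in (i), its ``in particular'' clause, and therefore (ii) are false as stated, even for the quadratic $J$ and $R$ used in the paper's algorithm.

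Take $d=1$, $N=2$, $\ell_1=\ell_2=\phi$ with $\phi(u)=\min\{u^2,(u-10)^2-5\}$, $g(y)=\tfrac12(y_1+y_2)$, $R(u)=\tfrac12u^2$ (so $u_p=p$ and $D_R(u,v)=\tfrac12(u-v)^2$), $J(x,\tau)=\tfrac{c}{2}(x^2+\|\tau\|^2)$, and $x=0$. Then $H^+(p,E)=h(p)+\tfrac12(E_1+E_2)$ with $h(p)=\min_u\{-pu+\phi(u)+\tfrac12u^2\}$, so the Hopf--Lax maximization separates. It gives $E_\alpha=c\bigl(\tau+\tfrac{\alpha}{2}(1,1)\bigr)$ and $p_\alpha=0$. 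The latter holds because $h(p)\le\phi(0)=0=h(0)$ for all $p$: indeed $\phi+\tfrac12u^2=\min\{\tfrac32u^2,\tfrac32u^2-20u+95\}$ vanishes only at $u=0$, since the second expression is at least $85/3$. Hence $u_\alpha=0=u_{p_\alpha}$ and $D_R(u_\alpha,u_{p_\alpha})=0$. Yet $g(\ell(u_\alpha)+E_\alpha)-m(E_\alpha)=\phi(0)-\min\phi=5$, and $\ell(0)=(0,0)$ is strictly dominated by $\ell(10)=(-5,-5)$. The constant sequence $\alpha_n\equiv\alpha$ then refutes (ii).

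This triple satisfies the whole optimality system \eqref{optimal_cond}, so no envelope or Danskin argument can extract the bound. Your fallback for the ``in particular'' clause fails for the same reason. The facts $u_\alpha=u_{p_\alpha}$ and minimality of $\Phi$ only give $g(\ell(u_\alpha)+E_\alpha)\le\inf_v\{g(\ell(v)+E_\alpha)+D_R(v,u_{p_\alpha})\}$, not $g(\ell(u_\alpha)+E_\alpha)\le m(E_\alpha)$.

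The paper's own proof reaches (i) from $\Phi(u_\alpha)=\inf\Phi\ge m(E_\alpha)$. That chain actually rearranges to $g(\ell(u_\alpha)+E_\alpha)-m(E_\alpha)\ge -D_R(u_\alpha,u_{p_\alpha})$, the opposite direction, so you should not try to reproduce it. The inequality you called the cleanest rigorous one is the correct statement. Written as an error bound it reads
\[
0\le g(\ell(u_\alpha)+E_\alpha)-m(E_\alpha)\le \delta_\alpha-D_R(u_\alpha,u_{p_\alpha}),\qquad \delta_\alpha:=\inf_{v}\bigl\{g(\ell(v)+E_\alpha)-m(E_\alpha)+D_R(v,u_{p_\alpha})\bigr\}.
\]
So the error is governed by how far, in the Bregman sense, the prox-center $u_{p_\alpha}$ lies from near-minimizers of $g(\ell(\cdot)+E_\alpha)$, not by $D_R(u_\alpha,u_{p_\alpha})$.

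If you replace the hypothesis of (ii) by $\delta_{\alpha_n}\to0$, your limit argument goes through. It uses lower semicontinuity of $(u,E)\mapsto g(\ell(u)+E)$ and upper semicontinuity of $m$. It is also more careful than the paper's argument, which assumes joint continuity of $(u,E)\mapsto g(\ell(u)+E)$ and attainment of $m(\bar E)$; neither follows from \Cref{assump_f}. Your argument for (iii) is correct and matches the paper's.
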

\begin{proof}
    Fix $(x,\tau)$ and $\alpha >0$. 
    Recall that $u_{p_\alpha}:=\nabla R^*(p_\alpha)$ and 
\[
D_R(u,v):=R(u)-R(v)-\langle \nabla R(v),u-v\rangle.
\]
Since $R$ is convex and $C^1$, we have that $D_R(u,v)\ge 0$ for all $u,v$.

For (i.), by the Bregman-form identity~\eqref{bregman_ind}, $u_\alpha$ minimizes the map 
$  u\mapsto g\bigl(\ell(u)+E_\alpha\bigr) + D_R\bigl(u,u_{p_\alpha}\bigr). $
Hence,
$ 
g(\ell(u_\alpha)+E_\alpha)+D_R(u_\alpha,u_{p_\alpha})
=\inf_{u}\{g(\ell(u)+E_\alpha)+D_R(u,u_{p_\alpha})\}.
$ 
Using the fact that $D_R\ge 0$, one obtains the following inequality: 
$ 
\inf_{u}\{g(\ell(u)+E_\alpha)+D_R(u,u_{p_\alpha})\}\ge \inf_u g(\ell(u)+E_\alpha)=m(E_\alpha).
$ 
Thus,
\[
g(\ell(u_\alpha)+E_\alpha)-m(E_\alpha)\le D_R(u_\alpha,u_{p_\alpha}).
\]
The lower bound then follows the definition of $m(E_\alpha)$ as the infimum of $g(\ell(u_\alpha)+E_\alpha)$, and hence the two-sided inequality~\eqref{eq:thm-ineq} follows. If $D_R(u_\alpha,u_{p_\alpha})=0$, then
$g(\ell(u_\alpha)+E_\alpha)=m(E_\alpha)$, and therefore
$u_\alpha\in\arg\min_u g(\ell(u)+E_\alpha)$.

For (ii.), let $\{\alpha_n\}$ be a sequence such that $E_{\alpha_n}\to \bar E$ and
$D_R(u_{\alpha_n},u_{p_{\alpha_n}})\to 0$. Denote $u_n:=u_{\alpha_n}$ and $E_n:=E_{\alpha_n}$. 
Let $u^*$ be a minimizer of $g(\ell(u) + \bar{E})$. Since
$ 
m(E_n)=\inf_u g(\ell(u)+E_n)\;\le\; g(\ell(u^*)+E_n),$ 
the result in (i.) implies
\begin{equation}
g\bigl(\ell(u_n)+E_n\bigr)
\;\le\;
g\bigl(\ell(u^*)+E_n\bigr)
+
D_R\bigl(u_n,u_{p_n}\bigr).
\label{eq:key_compare}
\end{equation}
Let $\bar u$ be any cluster point of $\{u_n\}$, that is along any subsequence, then $u_n\to \bar u$. Under the continuity of $(u,E)\mapsto g(\ell(u)+E)$ 
and since $E_n\to\bar E$, we have
\[
g\bigl(\ell(u_n)+E_n\bigr)\to g\bigl(\ell(\bar u)+\bar E\bigr),
\qquad
g\bigl(\ell(u^*)+E_n\bigr)\to g\bigl(\ell(u^*)+\bar E\bigr),
\]
and $D_R(u_n,u_{p_n})\to 0$. Passing to the limit on both sides of~\eqref{eq:key_compare} yields
\[
g(\ell(\bar u)+\bar E)\le g(\ell(u^*)+\bar E)=\inf_u g(\ell(u)+\bar E).
\]
Hence, $\bar u\in \arg\min_u g(\ell(u)+\bar E)$, and $\ell(\bar{u}_\alpha)\in \F_w(\ell)$ follows 
\Cref{prop-onePareto}.

For (iii.), for each $E\in\mathcal E$, any selected minimizer $u_E\in\arg\min_u g(\ell(u)+E)$ is weak Pareto optimal by the same argument  in (ii.). Therefore, $\{\ell(u_E):E\in\mathcal{E}\}\subset \F_w(\ell)$.  
Now suppose $\mathcal{E}$ is dense in some set $G\subset\mathbb{R}^N$ and the map
$ 
f:G\to\mathbb{R}^N, f(E):=\ell(u_E),
$ 
is single-valued and continuous on $G$. 
Since $\mathcal E$ is dense in $G$, we have $\overline{\mathcal E}=G$.  
By continuity of $f$, it holds that $f(\overline{\mathcal E})\subset \overline{f(\mathcal E)}$, and hence
\[
\{\ell(u_E):E\in G\} = f(G)=f(\overline{\mathcal E}) \subset \overline{f(\mathcal E)}
= \overline{\{\ell(u_E):E\in\mathcal E\}},
\]
or, in other words, $\{\ell(u_E):E\in\mathcal E\}$ is dense in $\{\ell(u_E):E\in G\}$.
\end{proof}

\subsection{Numerical exploration by a primal-dual algorithm}\label{sec:unconstrained_algorithm}

We now present an optimization-based primal-dual scheme for numerically exploring the Pareto front via the Hopf-Lax formula. 
Let the terminal cost $J$ and regularizer $R$ be quadratic:
\begin{equation}\label{final_regu}
    J(x,\tau) = \frac{c}{2} (\| x \|^2 + \|\tau\|^2), \quad R(u) = \frac{\mu}{2} \| u\|^2
\end{equation}
for  some $c>0$ and $\mu>0$, where $\|\cdot\|$ denotes the Euclidean norm. In this case, the Hopf-Lax formula~\eqref{hopf-upper} gives 
\begin{equation}\label{hopf-upper-num}
    V^+((x,\tau),\alpha) = \max_{p,E} \left\{\langle x, p \rangle + \langle \tau,E \rangle - \frac{\|p\|^2 + \|E\|^2}{2c} + \alpha H^+(p,E) \right\} .
\end{equation} 
For ease of notation, we omit the dependence on $((x,\tau),\alpha)$, and denote by $(\bar{p},\bar{E})$ the maximizer in~\eqref{hopf-upper-num} and $\bar{u}$ the inner minimizer of $H^+$ that corresponds to $(\bar{p},\bar{E})$. 
Let 
\begin{equation}\label{eq:pi}
     \bar{\pi} \in \partial g(\ell(\bar{u})+ \bar{E}),
\end{equation}
where $\partial$ here denotes the (convex) subdifferential. 
Then, the (Karush–Kuhn–Tucker (KKT), e.g., see~\cite{hiriart1993convex,bauschke2020correction}) optimality conditions for~\eqref{hopf-upper-num} are given by
\begin{equation}\label{optimal_cond}
\left\{
    \begin{aligned}
        & \bar{p} = c(x - \alpha \bar{u}), \\
        & \bar{E} = c(\tau + \alpha \bar{\pi}), \\ 
        & 0 \in - \bar{p} + \partial_u(\bar{\pi} \cdot \ell)(\bar{u}) + \mu \bar{u} \text{ and if } \ell \in \Cc^1, \bar{p} = (\Jac[\ell](\bar{u}))^T \bar{\pi} + \mu \bar{u}, \\
    \end{aligned}
    \right.
\end{equation}
where 
$\Jac[\ell]$ denotes the Jacobian of $\ell$. 
In~\eqref{optimal_cond}, the first two conditions arise from stationarity of the dual objective $(p,E)$, and the third condition is the first order optimality condition of the inner minimization over $u$.

We solve iteratively for $(\bar{u},\bar{\pi})$  and recover $(\bar{p},\bar{E})$ at each iterative step using the first two relations in~\eqref{optimal_cond}. In particular, given an initialization $(u^{(0)},\pi^{(0)})$ and parameters $\rho,\eta >0$, the iteration proceeds as follows: 
\begin{enumerate}
    \item \textit{Dual update for $\pi$.} The multiplier $\pi$ is updated via a proximal step at the current $(u^{(j)},E^{(j)})$ as follows:
    \begin{equation}\label{dual_pi}
        \pi^{(j+1)} = \mathrm{prox}_{\rho g^*}\!\left(
        \pi^{(j)} + \rho\,(\ell(u^{(j)}) + E^{(j)})\right) ,
    \end{equation}
    where $E^{(j)} = c\big(\tau + \alpha \pi^{(j)}\big)$. When only $\mathrm{prox}_g$ is available, one can use the Moreau identity defined by 
    $\mathrm{prox}_{\rho g^*}(v) = v - \rho\,\mathrm{prox}_{g/\rho}(v/\rho)$.
    \item \textit{Primal update for $u$.} The control $u$ is updated using a Levenberg-Marquardt step~\cite{levenberg1944method,marquardt1963algorithm} on the residual of the stationarity condition as follows. In particular, we define the stationarity residual as
    \begin{equation}
        r^{(j)}(u) := \Jac[\ell](u)^\top \pi^{(j+1)} + \mu u - c(x - \alpha u). 
    \end{equation}
    Then, the update reads 
    \begin{equation}
        u^{(j+1)} = u^{(j)} - \eta \,(B^{(j)})^{-1} r^{(j)}(u^{(j)}) ,
    \end{equation}
    where the preconditioner is taken as 
    \begin{equation}
    B^{(j)} = (\mu+\alpha c)I_d + \Jac[\ell](u^{(j)})^\top \Jac[\ell](u^{(j)}). 
    \end{equation}
\end{enumerate}
The above iteration scheme is repeated until both the residual $\|r^{(j)}(u^{(j+1)})\|$ 
and the iterate change $\|\pi^{(j+1)}-\pi^{(j)}\|$ 
are below a prescribed tolerance. At convergence, we set $\bar{u}=u^{(j+1)}$, $\bar{\pi}=\pi^{(j+1)}$ and recover
\begin{equation}
    \bar{p} = c(x - \alpha \bar{u}), 
    \qquad 
    \bar{E} = c(\tau + \alpha \bar{\pi}).
\end{equation}
In~\Cref{alg:PD-KKT}, we give a sketch of the proposed algorithm. 

\begin{algorithm}
\caption{Primal--dual algorithm for the optimality conditions~\eqref{optimal_cond}. }
\label{alg:PD-KKT}
\begin{algorithmic}[1]
\Require $x\in\mathbb{R}^d$, $\tau\in\mathbb{R}^N$, $\alpha>0$, $c>0$, $\mu>0$; 
          $\ell:\mathbb{R}^d\!\to\!\mathbb{R}^N$ with Jacobian $\Jac[\ell]$; convex, increasing $g$ with prox$_g$ (or prox$_{g^*}$); step sizes $(\rho,\eta)$; tolerance $\varepsilon>0$
\State Initialize $u^{(0)} \gets \frac{x}{\max(\alpha, 1)}$, $\pi^{(0)} \gets 0$
\For{$k=0,1,2,\dots$}
  \State \textbf{Dual update on scalarization variable $\pi$:}
  \State $E^{(j)} \gets c\big(\tau + \alpha\,\pi^{(j)}\big)$
  \State $v^{(j)} \gets \pi^{(j)} + \rho\big(\ell(u^{(j)}) + E^{(j)}\big)$
  \State $\pi^{(j+1)} \gets v^{(j)} - \rho\,\mathrm{prox}_{g/\rho}\!\big(v^{(j)}/\rho\big)$ \Comment{Moreau Identity for prox$_{\rho g^*}$}
  \State \textbf{Primal update on control $u$:}
  \State $r^{(j)}(u) \gets \Jac[\ell](u)^\top \pi^{(j+1)} + \mu u - c\,(x - \alpha u)$
  \State $B^{(j)} \gets (\mu+\alpha c)I_d + \Jac[\ell](u^{(j)})^\top \Jac[\ell](u^{(j)})$
  \State Solve $B^{(j)} s^{(j)} = r^{(j)}\!\big(u^{(j)}\big)$ 
  \State $u^{(j+1)} \gets u^{(j)} - \eta\, s^{(j)}$
  \If{$\|r^{(j)}(u^{(j+1)})\|\le \varepsilon$ \textbf{and} $\|\pi^{(j+1)}-\pi^{(j)}\|\le \varepsilon$}
     \State \textbf{break}
  \EndIf
\EndFor
\State \textbf{Set} $u^\star\gets u^{(j+1)}$, $\pi^\star\gets \pi^{(j+1)}$
\end{algorithmic}
\end{algorithm}

\begin{remark}
    The portion of the Pareto optimal set that~\Cref{alg:PD-KKT} can recover is determined by  $\dom(g^*)$. In particular, when $g$ is the weighted-sum function, $\dom(g^*)$ is a singleton, i.e., $|\dom(g^*)|=1$, and the method can recover at most one corresponding Pareto optimal solution. 
    The choice of preference functions $g$ used in the numerical experiments is detailed in~\Cref{sec-numerics}. 
    \end{remark}

\begin{remark}\label{rem:pi-update}
    When $g$ is differentiable, the subdifferential in~\eqref{eq:pi} becomes a singleton, and (if its derivative is easily computable) Lines 5-6 in \Cref{alg:PD-KKT} can be replaced with $\pi^{(j+1)} \gets \nabla g(\ell(u^{(j)})+E^{(j)})$.
\end{remark}

\begin{remark}
    Recall from our discussion of~\eqref{eq:perturb-lagrangian} that different values of $\bar E$ yield different Pareto optimal points. In \Cref{alg:PD-KKT}, we can obtain different values of $\bar E$ by varying $x,\tau,$ and/or $\alpha$.
    Fixing $(x,\tau)$ and varying $\alpha$ is advantageous for computing $V^+$ since $\alpha$ acts a linear scaling on the Hamiltonian in the Hopf-Lax formula~\eqref{hopf-upper-num}. However, in terms of MOO, continuously varying $\alpha$ can lead to discontinuous recovery of the Pareto front due to the nonlinear dependence of $u^\star$ on $\alpha$. Fixing instead $x,\alpha$ and continuously varying $\tau$ generally yields continuous curves along the Pareto front. Indeed, \eqref{optimal_cond} shows that $\tau$ and $\alpha$ affect $\bar E$ only through the combination $\tau + \alpha \bar \pi$, so continuously varying $\tau$ is analagous to continuously varying the scalarization in~\eqref{saddle-one}. Hence, we take this latter approach in our numerical implementations.
    
\end{remark}

\subsubsection{Convergence of the primal-dual algorithm}

To show the convergence of \Cref{alg:PD-KKT}, we first recall the Kurdyka-Łojasiewicz (KL) property, which is a standard tool for proving convergence of nonconvex descent-type algorithms~\cite{attouch2013convergence}. 
Recall that for any symmetric positive-definite matrix $M \in \R^{d\times d}$, the (Mahalanobis) norm $\| \cdot\|_M$ of a vector $r\in \R^d$ induced by $M$ is defined by $\|r \|_M = \sqrt{r^T M r}$.

\begin{definition}[see, e.g., ~\cite{attouch2010proximal, bolte2014proximal}]
    Let $F:\R^m\to\Rb$ be proper and lsc, and let $\bar z\in\dom(\partial F)$.
We say that $F$ has the \textit{KL property at $\bar z$} if there exist $\eta\in(0,+\infty]$, a neighborhood $U$ of $\bar z$, and a concave function $\varphi:[0,\eta)\to\R_+$ that is
continuous on $[0,\eta)$ and $C^1$ on $(0,\eta)$ with $\varphi(0)=0$ and $\varphi'(s)>0$ for $s>0$, such that
for all $z\in U$ with $F(\bar z)<F(z)<F(\bar z)+\eta$,
\[
\varphi'\!\big(F(z)-F(\bar z)\big)\,\mathrm{dist}\!\big(0,\partial F(z)\big)\ \ge\ 1.
\]
If $\varphi(s)=c\,s^{1-\theta}$ for some $\theta\in[0,1)$, we say that $F$ has\textit{ KL exponent $\theta$ at $\bar z$}.
\end{definition}

We now introduce a merit function that measures violation of the primal-dual optimality system~\eqref{optimal_cond}. 
Throughout this subsection, we fix $(x,\tau)\in\R^d\times\R^N$ and $\alpha,c,\mu>0$ and denote
\[
E(u,\pi):=c\big(\tau+\alpha\,\pi\big),
\qquad
B(u):=(\mu+\alpha c)I_d+\Jac[\ell](u)^\top\Jac[\ell](u).
\]
Note that $B(u)\succ 0$ for all $u$ since $\mu+\alpha c>0$.

\begin{lemma}\label{lem:definability_closure}
Assume $g:\R^N\to\overline{\R}$ is proper, lsc, convex, and definable in an o-minimal structure \cite{bolte2007clarke,bolte2014proximal}.
Then,
\begin{enumerate}
\item the Legendre-Fenchel conjugate $g^*$ is definable;
\item for any $\rho>0$, the proximal mapping $\prox_{\rho g^*}$ is definable.
\end{enumerate}
Moreover, if $u\mapsto B(u)$ is definable and $B(u)\succ 0 \ \forall u$, then $u\mapsto B(u)^{-1}$ is definable.
\end{lemma}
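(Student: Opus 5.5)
The whole argument rests on the elementary closure properties of definable sets and functions in an o-minimal structure (see \cite{bolte2007clarke}). Definable sets are closed under finite Boolean operations, Cartesian products and projections. Consequently, any set or function described by a first-order formula is definable, provided every parameter in the formula is definable and every quantifier ranges over a definable set. The plan is therefore to exhibit each object as such a formula built from the definable function $g$, its graph, and semialgebraic operations. Inner products, addition and scalar multiplication are semialgebraic, hence definable in every o-minimal structure.

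\emph{Step 1, the conjugate.} Work with the epigraph, which avoids the value $+\infty$. The epigraph of $g^*$ is
\[
\{(\lambda,s)\in\R^N\times\R : \forall z\in\R^N,\ \forall t\in\R,\ \big((z,t)\in\operatorname{epi} g \Rightarrow \langle\lambda,z\rangle - t\le s\big)\}.
\]
This set is obtained from the definable set $\operatorname{epi} g$ and a semialgebraic inequality by complements and projections. Universal quantifiers are complements of existential ones. Hence $\operatorname{epi} g^*$ is definable. Its domain is the projection of the epigraph, and the graph of $g^*$ on $\dom(g^*)$ is the set of pairs $(\lambda,s)$ with $s$ minimal in the fiber, which is again first-order. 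Properness of $g$, which holds since $g$ is lsc and convex, ensures $g^*$ is proper, so these descriptions are well defined.

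\emph{Step 2, the proximal map.} By convexity of $g^*$ and the strong convexity of the quadratic term, $\prox_{\rho g^*}(v)$ is the unique minimizer of $w\mapsto g^*(w)+\frac{1}{2\rho}\|w-v\|^2$. Its graph is
\[
\{(v,w) : w\in\dom g^*,\ \forall w'\in\R^N,\ g^*(w)+\tfrac{1}{2\rho}\|w-v\|^2\le g^*(w')+\tfrac{1}{2\rho}\|w'-v\|^2\}.
\]
This is a first-order formula in the definable function $g^*$ from Step 1, so the graph is definable. Alternatively, one can use the Moreau identity $\prox_{\rho g^*}(v)=v-\rho\,\prox_{g/\rho}(v/\rho)$ and apply the same argmin description directly to $g$.

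\emph{Step 3, the inverse.} The graph of $u\mapsto B(u)^{-1}$ is $\{(u,M): B(u)M=I_d\}$. The relation $B(u)M=I_d$ consists of polynomial identities in the entries of $B(u)$ and $M$, so the graph is the preimage of a semialgebraic set under the definable map $(u,M)\mapsto (B(u),M)$. The inverse exists for every $u$ because $B(u)\succ0$, and it is unique. An alternative route is Cramer's rule, which writes $B(u)^{-1}$ as a rational function of the entries of $B(u)$ with nonvanishing determinant.

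The only step needing care is Step 1. There one must handle extended-valued functions through epigraphs rather than graphs, and check that the supremum is captured by a first-order formula. The remaining steps are routine once the first-order argmin description is in place.
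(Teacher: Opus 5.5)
Your proposal is correct and takes the same route as the paper. The paper simply invokes o-minimal closure properties for $g^*$ and for the unique-minimizer characterization of $\prox_{\rho g^*}$, and uses the rationality of matrix inversion (Cramer's rule, with $\det B(u)\neq 0$ because $B(u)\succ 0$) for $B(u)^{-1}$; you write out the corresponding first-order formulas explicitly. The only slip is your remark that properness of $g$ ``holds since $g$ is lsc and convex'', which is false in general (e.g.\ $g\equiv+\infty$), but this is harmless because properness is a hypothesis of the lemma.
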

\begin{proof}
    Statements (1) and (2) are standard consequences of o-minimal closure properties. In particular, conjugation and argmin mappings of strongly convex, definable functions preserve definability. Moreover, $\prox_{\rho g^*}(v)$ is the unique minimizer of $\pi\mapsto g^*(\pi)+\frac{1}{2\rho}\|\pi-v\|^2$.    
    For the last statement, note that matrix inversion is a rational operation in the matrix entries and hence is definable on the set where $\det(B(u))\neq 0$, which here is all of $\R^d$ since $B(u)\succ 0$.
\end{proof}

\begin{lemma}\label{lem:psi_KL}
Assume~\Cref{assump_f} and \Cref{assump_g}. Assume further that $g$ is semi-algebraic (or, more generally, definable in an o-minimal structure~\cite{bolte2007clarke,attouch2013convergence}) (e.g., $g$ is a weighted max, weighted sum, or log-sum-exp), and let $\ell:\R^d\to\R^N$ be $\Cc^1$ with semi-algebraic (or more generally, definable) graph (e.g., polynomials, rational functions, $\log\!\sum\exp$). 
Fix $x\in\R^d$, $\tau\in\R^N$, $\alpha,c,\mu>0$, and define for $(u,\pi)\in\R^d\times\R^N$
\begin{equation}\label{merit}
\begin{adjustbox}{width=\textwidth}
$\Psi(u,\pi)
:= \frac{1}{2}\|\Jac[\ell](u)^\top \pi + \mu u - c(x-\alpha u)\|_{B(u)^{-1}}^2+ \frac{1}{2\rho^2}\,\|\operatorname{prox}_{\rho g^*}\big(\pi+\rho(\ell(u)+E(u,\pi))\big)-\pi\|^2.
$
\end{adjustbox}
\end{equation}
Then, $\Psi$ is proper, lsc, and definable. In particular, $\Psi$ satisfies the KL property at every point of its domain. 
\end{lemma}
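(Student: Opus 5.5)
The argument splits into two parts. First we show that $\Psi$ is finite and continuous on $\R^d\times\R^N$, which gives properness and lower semicontinuity at once. Then we show that $\Psi$ is definable by writing it as a composition of definable maps. The KL property then follows from the standard result that proper, lsc, definable functions satisfy the KL inequality at every point of $\dom(\partial\Psi)$ \cite{bolte2007clarke,attouch2013convergence}.

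\textbf{Finiteness and continuity.} Since $\ell\in\Cc^1$, the map $u\mapsto\Jac[\ell](u)$ is continuous. Hence the residual
\[
r(u,\pi):=\Jac[\ell](u)^\top\pi+\mu u-c(x-\alpha u)
\]
is continuous and finite. The matrix $B(u)=(\mu+\alpha c)I_d+\Jac[\ell](u)^\top\Jac[\ell](u)$ is continuous and satisfies $B(u)\succeq(\mu+\alpha c)I_d$. So $B(u)^{-1}$ exists, is continuous in $u$, and has norm at most $(\mu+\alpha c)^{-1}$. This makes the first term $\tfrac12 r^\top B^{-1}r$ finite and continuous.

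For the second term, $g^*$ is proper, lsc and convex because $g$ is. Therefore $\prox_{\rho g^*}$ is single-valued on all of $\R^N$ and $1$-Lipschitz (firmly nonexpansive). Its argument $\pi+\rho(\ell(u)+c(\tau+\alpha\pi))$ is continuous in $(u,\pi)$; this uses that $\ell$ is finite-valued, which holds since $\ell\in\Cc^1$. So the second term is also finite and continuous.

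Consequently $\Psi$ is real-valued, continuous and nonnegative. In particular it is proper and lsc, and $\dom\Psi=\R^{d+N}$.

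\textbf{Definability.} Here we invoke \Cref{lem:definability_closure}. Because $\ell$ has definable graph, $\Jac[\ell]$ is definable: its entries are limits of definable difference quotients, and the derivative of a definable $\Cc^1$ map is definable. Then $B(u)$ is a polynomial expression in definable functions, so it is definable, and by the last part of \Cref{lem:definability_closure} so is $B(u)^{-1}$. The quadratic form $r^\top B^{-1}r$ is built from definable maps by sums and products, hence definable. By part (2) of \Cref{lem:definability_closure}, $\prox_{\rho g^*}$ is definable, and composing it with the definable affine-plus-$\ell$ argument keeps it definable. The squared norm is polynomial. Since definable functions are closed under composition, finite sums and products (all within one fixed o-minimal structure), $\Psi$ is definable.

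\textbf{Main obstacle.} The delicate point is making sure every ingredient lives in a \emph{single} o-minimal structure. In particular, $g$ and $\ell$ must be definable in a common structure, for example both semi-algebraic or both in $\R_{\exp}$ for log-sum-exp, and the conjugation and prox of $g$ must stay in it. Conjugation is a supremum over a first-order formula, and the prox is the argmin of a strongly convex definable function, so both are definable by quantifier closure. I will state explicitly that all objects are taken in one structure. Finally, since $\Psi$ is continuous and definable, the Bolte–Daniilidis–Lewis theorem gives the KL property, with a desingularizing function $\varphi(s)=cs^{1-\theta}$ in the definable case, at every point of $\dom\partial\Psi$ (limiting subdifferential).
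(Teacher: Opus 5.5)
Your proof is correct and follows essentially the same route as the paper. Finiteness and continuity of both terms give properness and lower semicontinuity, definability comes from the closure properties in \Cref{lem:definability_closure}, and the KL property is then the standard theorem for proper lsc definable functions; your remarks about needing one common o-minimal structure and about the KL property living on $\dom\partial\Psi$ are, if anything, more careful than the paper. One aside should be corrected, although the lemma does not need it: a power-type desingularizer $\varphi(s)=c\,s^{1-\theta}$ is guaranteed only in polynomially bounded structures (e.g.\ semi-algebraic or globally subanalytic), not for general definable functions such as those in $\R_{\exp}$ that arise from log-sum-exp.
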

\begin{proof}
    Since $\ell$ is $\Cc^1$ with definable graph, both $\ell$ and $\Jac[\ell]$ are also definable. Thus, the map $u\mapsto B(u)$ is definable and $B(u)\succ 0$, so $u\mapsto B(u)^{-1}$ is definable by~\Cref{lem:definability_closure}. Hence, the first term in \eqref{merit} is definable and continuous. 

    By Lemma~\ref{lem:definability_closure}, $g^*$ and $\prox_{\rho g^*}$ are definable, and since
$E(u,\pi)=c(\tau+\alpha\pi)$ is affine, the composition
$(u,\pi)\mapsto\prox_{\rho g^*}\big(\pi+\rho(\ell(u)+E(u,\pi))\big)$ is also definable. 
Therefore, $\Psi$ is a sum of definable continuous functions and is in turn definable and lsc (indeed, it is continuous). 
$\Psi$ is proper  because $\Psi\ge 0$ and is finite everywhere. Finally, any proper, lsc, definable function satisfies the KL property at every point of its domain (e.g., see 
\cite{bolte2007clarke,attouch2013convergence}). 
\end{proof}

We place the following additional assumption on $g$.
\begin{assumption}\label{ass:bounded_dom_gstar}
The domain $\dom(g^*)$ is bounded.
\end{assumption}

\begin{lemma}\label{lem:pi_bounded}
Under Assumption~\ref{ass:bounded_dom_gstar}, we have that the proximal update
$\pi^{(j+1)}=\prox_{\rho g^*}(v^{(j)})$ satisfies $\pi^{(j)}\in\dom(g^*)$ for all $j$. Hence, $\{\pi^{(j)}\}$ is bounded.
\end{lemma}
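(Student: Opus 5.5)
The plan is to read the claim directly off the definition of the proximal mapping. For $\rho>0$ and any $v\in\R^N$, $\prox_{\rho g^*}(v)$ is the unique minimizer of
\[
\pi\mapsto g^*(\pi)+\tfrac{1}{2\rho}\|\pi-v\|^2 .
\]
First I will check that this minimizer exists, is unique, and is finite. Under \Cref{assump_g}, $g$ is proper, lsc and convex, so its conjugate $g^*$ is also proper, lsc and convex. Adding the quadratic term gives a proper, lsc, strongly convex function. Such a function is coercive, so it has exactly one minimizer and a finite minimum value.

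Second, I will show that the minimizer lies in $\dom(g^*)$. Because $g^*$ is proper, there is some $\pi_0$ with $g^*(\pi_0)<+\infty$. At $\pi_0$ the objective is finite, so the minimum value is finite. At any $\pi\notin\dom(g^*)$ the objective equals $+\infty$, so such a $\pi$ cannot be the minimizer. Hence $\prox_{\rho g^*}(v)\in\dom(g^*)$ for every $v$. The same fact can be phrased as $\operatorname{range}(\prox_{\rho g^*})=\dom(\partial g^*)\subseteq\dom(g^*)$.

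Third, I will apply this to the iterates of \Cref{alg:PD-KKT}. Each $\pi^{(j+1)}=\prox_{\rho g^*}(v^{(j)})$ is a prox output, so $\pi^{(j+1)}\in\dom(g^*)$ for all $j\ge 0$. The Moreau-identity implementation in Line 6 computes the same quantity, so the conclusion does not depend on which formula is used. The initialization $\pi^{(0)}=0$ needs separate treatment, since it is not a prox output. I would either assume $0\in\dom(g^*)$, or note that the statement holds for $j\ge1$ and that adding a single extra point does not affect boundedness. Combining this with \Cref{ass:bounded_dom_gstar}, all iterates $\pi^{(j)}$ lie in $\dom(g^*)\cup\{\pi^{(0)}\}$. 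This set is bounded, so the sequence $\{\pi^{(j)}\}$ is bounded.

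There is no real obstacle in this argument. The only delicate point is the index $j=0$. I would handle it by stating explicitly that membership in $\dom(g^*)$ is guaranteed for $j\ge1$, and for $j=0$ whenever $0\in\dom(g^*)$, which does not change the boundedness conclusion.
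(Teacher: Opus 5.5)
Your argument is correct and is the reasoning the paper leaves implicit, since the lemma is stated there without proof: $\prox_{\rho g^*}(v)$ minimizes a proper, strongly convex function that equals $+\infty$ outside $\dom(g^*)$, so every prox output lies in the bounded set $\dom(g^*)$. Your caveat at $j=0$ is also justified, because for the paper's examples (weighted sum, weighted max, log-sum-exp) $\dom(g^*)$ is a simplex-type set that does not contain $0=\pi^{(0)}$, so the literal membership claim holds only for $j\ge 1$; this does not affect boundedness.
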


\begin{lemma}\label{lem:fixedpoint_optcond}
Let $(u,\pi)\in\R^d\times\R^N$. Then, the following are equivalent:
\begin{enumerate}
\item[(i)] $(u,\pi)$ satisfies the optimality system~\eqref{optimal_cond};
\item[(ii)] $\pi=\prox_{\rho g^*}\big(\pi+\rho(\ell(u)+E(u,\pi))\big)$ and the corresponding stationarity residual in
\eqref{optimal_cond} vanishes.
\end{enumerate}
In particular, $\Psi(u,\pi)=0$ implies \eqref{optimal_cond}.
\end{lemma}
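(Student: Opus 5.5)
The plan is to show that the fixed-point relation on $\pi$ is equivalent to the subgradient condition \eqref{eq:pi}, and then match the remaining conditions directly. The key tool is the standard characterization of the proximal map of a proper, lsc, convex function. For any $v$, $w=\prox_{\rho g^*}(v)$ holds if and only if $(v-w)/\rho\in\partial g^*(w)$. Since $g$ is proper, lsc and convex (\Cref{assump_g}), Fenchel--Moreau gives $g^{**}=g$. Consequently $y\in\partial g^*(\pi)$ is equivalent to $\pi\in\partial g(y)$.

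The steps are as follows.

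\begin{enumerate}
\item Set $v=\pi+\rho(\ell(u)+E(u,\pi))$ and $w=\pi$. Then $\pi=\prox_{\rho g^*}(v)$ holds if and only if $(v-\pi)/\rho=\ell(u)+E(u,\pi)\in\partial g^*(\pi)$. By the conjugate subgradient inversion, this is equivalent to $\pi\in\partial g(\ell(u)+E(u,\pi))$, which is exactly \eqref{eq:pi} with $\bar E=E(u,\pi)$.

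\item The relation $E=c(\tau+\alpha\pi)$ is built into the definition of $E(u,\pi)$. Likewise, $p$ is recovered as $c(x-\alpha u)$, so the first two lines of \eqref{optimal_cond} hold by construction once $(p,E)$ is defined from $(u,\pi)$.

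\item Substituting $p=c(x-\alpha u)$ into the third line of \eqref{optimal_cond} (in the $\Cc^1$ case) gives $\Jac[\ell](u)^\top\pi+\mu u-c(x-\alpha u)=0$. This is precisely the vanishing of the stationarity residual. Together with the first two steps, this proves (i)$\Leftrightarrow$(ii).
\end{enumerate}

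For the final claim, note that $\Psi$ is a sum of two nonnegative terms. Moreover, $B(u)^{-1}\succ 0$, so $\|\cdot\|_{B(u)^{-1}}$ is a genuine norm. Hence $\Psi(u,\pi)=0$ forces both the stationarity residual to vanish and $\prox_{\rho g^*}(\cdot)=\pi$. This is condition (ii), and (i) follows.

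The main subtlety is the prox characterization together with the inversion $\partial g^*=(\partial g)^{-1}$. Both steps rely on the convexity, lsc and properness of $g$, which \Cref{assump_g} supplies. A minor point is that the third line of \eqref{optimal_cond} is stated in the $\Cc^1$ form, so the argument assumes $\ell\in\Cc^1$. This matches the hypotheses of \Cref{lem:psi_KL} under which $\Psi$ is defined.
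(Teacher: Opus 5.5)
Your proposal is correct and follows essentially the same route as the paper. You rewrite the prox fixed-point identity as $\ell(u)+E(u,\pi)\in\partial g^*(\pi)$, invert it to $\pi\in\partial g(\ell(u)+E(u,\pi))$ by conjugacy of subdifferentials, combine this with the vanishing stationarity residual, and conclude that $\Psi(u,\pi)=0$ forces both nonnegative terms to vanish. Your extra remarks (the explicit optimality condition of the prox subproblem, $B(u)^{-1}\succ 0$, and the need for $\ell\in\Cc^1$) make explicit what the paper's short proof leaves implicit.
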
 
\begin{proof}
    The proximal fixed-point identity is equivalent to the inclusion
$\ell(u)+E(u,\pi)\in\partial g^*(\pi)$, which is equivalent to $\pi\in\partial g(\ell(u)+E(u,\pi))$
by conjugacy of subdifferentials for proper, lsc, convex functions. 
Together with the vanishing of the stationarity residual, this relation is exactly equivalent to \eqref{optimal_cond}. 
The last statement follows since $\Psi(u,\pi)=0$ forces both squared residual terms in \eqref{merit} to be zero. 
\end{proof}

\begin{lemma}\label{lem:abstract_KL}
Let $\Psi:\R^m\to\overline{\R}$ be proper and lsc and satisfy the KL property.
Let $\{z^{(j)}\}\subset\R^m$ be bounded, and assume that there exist constants $a,b>0$ such that:
\begin{enumerate}
\item[(H1)] $\Psi(z^{(j+1)})\le \Psi(z^{(j)})-a\|z^{(j+1)}-z^{(j)}\|^2$ for all $j$;
\item[(H2)] $\dist(0,\partial\Psi(z^{(j+1)}))\le b\|z^{(j+1)}-z^{(j)}\|$ for all $j$.
\end{enumerate}
Then $\sum_k\|z^{(j+1)}-z^{(j)}\|<\infty$ and $z^{(j)}\to \bar z$ for some critical point $\bar z$.
If $\Psi$ has KL exponent $\theta\in[0,1)$ at $\bar z$, then the standard KL rates hold.
\end{lemma}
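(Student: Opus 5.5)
This is the abstract KL convergence theorem of Attouch–Bolte–Svaiter type, and I would reproduce the standard argument of \cite{attouch2013convergence,bolte2014proximal}.

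\emph{Step 1: monotonicity and limit points.} By (H1), $\{\Psi(z^{(j)})\}$ is nonincreasing. Since $\Psi$ is proper and the iterates are assumed to lie in its domain, the sequence is bounded below (in our application $\Psi\ge 0$), so it converges to some $\Psi^*$. Summing (H1) gives $\sum_j\|z^{(j+1)}-z^{(j)}\|^2<\infty$, hence $\|z^{(j+1)}-z^{(j)}\|\to0$. Let $\omega$ be the set of cluster points. Because the sequence is bounded, $\omega$ is nonempty and compact, and it is connected since successive increments vanish. I would show that $\Psi\equiv\Psi^*$ on $\omega$. Lower semicontinuity gives $\Psi(\bar z)\le\Psi^*$. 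For the reverse inequality, I would use continuity of $\Psi$, which holds for the merit function in \Cref{lem:psi_KL}; in the general lsc case one adds the usual hypothesis that $\Psi(z^{(j_k)})\to\Psi(\bar z)$ along the subsequence. By (H2) there exist $w^{(j)}\in\partial\Psi(z^{(j)})$ with $w^{(j)}\to0$, and closedness of the limiting subdifferential then gives $0\in\partial\Psi(\bar z)$. So every cluster point is critical.

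\emph{Step 2: uniformized KL and finite length.} If $\Psi(z^{(J)})=\Psi^*$ for some $J$, then (H1) forces the sequence to be stationary from $J$ on, and we are done. Otherwise $\Psi(z^{(j)})>\Psi^*$ for all $j$. By the uniformized KL lemma on the compact set $\omega$ (Bolte–Sabach–Teboulle, Lemma 6), there are $\varepsilon,\eta$ and a single desingularizer $\varphi$ valid at every $z$ with $\dist(z,\omega)<\varepsilon$ and $\Psi^*<\Psi(z)<\Psi^*+\eta$, and for large $j$ the iterates $z^{(j)}$ satisfy both conditions. Write $\Delta_j=\varphi(\Psi(z^{(j)})-\Psi^*)-\varphi(\Psi(z^{(j+1)})-\Psi^*)$. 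Concavity of $\varphi$, (H1) and (H2) give
\[
\Delta_j\ \ge\ \varphi'\big(\Psi(z^{(j)})-\Psi^*\big)\,a\|z^{(j+1)}-z^{(j)}\|^2\ \ge\ \frac{a\|z^{(j+1)}-z^{(j)}\|^2}{b\|z^{(j)}-z^{(j-1)}\|}.
\]
Then $2\sqrt{xy}\le x+y$ yields $2\|z^{(j+1)}-z^{(j)}\|\le\|z^{(j)}-z^{(j-1)}\|+\tfrac{b}{a}\Delta_j$. The $\Delta_j$ telescope, so summing gives $\sum_j\|z^{(j+1)}-z^{(j)}\|<\infty$. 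Hence $\{z^{(j)}\}$ is Cauchy and converges to some $\bar z\in\omega$, which is critical by Step 1.

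\emph{Step 3: rates.} With $\varphi(s)=cs^{1-\theta}$, set $S_j=\sum_{i\ge j}\|z^{(i+1)}-z^{(i)}\|$. The inequality of Step 2 combined with the KL inequality gives a recursion of the form $S_j\le C(S_{j-1}-S_j)+C(S_{j-1}-S_j)^{(1-\theta)/\theta}$. This produces the standard rates: finite termination for $\theta=0$, linear convergence for $\theta\in(0,\tfrac12]$, and the sublinear rate $O\big(j^{-(1-\theta)/(2\theta-1)}\big)$ for $\theta\in(\tfrac12,1)$.

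The main obstacle I expect is Step 1's claim that $\Psi$ is constant on $\omega$, because lsc alone does not give it. I would handle this through continuity of the concrete $\Psi$ from \Cref{lem:psi_KL}.
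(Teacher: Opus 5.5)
\paragraph{Approach.} The paper states this lemma without proof, as the standard result of Attouch--Bolte--Svaiter \cite{attouch2013convergence}. Your Steps 1--3 are exactly that standard argument and are carried out correctly: the uniformized KL inequality on the cluster set $\omega$ as in \cite{bolte2014proximal}, the $2\sqrt{xy}\le x+y$ telescoping, and the Attouch--Bolte recursion for the rates.

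\paragraph{The obstacle you flagged is real.} It is a defect of the statement, not of your method. The original theorem also assumes a continuity condition (H3): some subsequence satisfies $z^{(j_k)}\to\bar z$ and $\Psi(z^{(j_k)})\to\Psi(\bar z)$. Without it the lemma as stated is false.

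Here is a counterexample. On the open unit disk of $\R^2$ let $\Psi(z)=\psi(|z|)$, where $\psi$ has the following properties:
\begin{itemize}
\item $\psi$ is $\Cc^1$ and strictly decreasing on $[0,1)$, with $\psi(r)=\psi(0)-r^2$ near $0$;
\item $\psi(r_j)=1/j$ for $r_j=1-(j+1)^{-2}$;
\item $\psi(r)=1/j-k_j(r-r_j)^3$ near each $r_j$, and $\psi'<0$ elsewhere on $(0,1)$.
\end{itemize}
Set $\Psi=-1$ on the unit circle and $\Psi=0$ outside it.

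Then $\Psi$ is proper and lsc, and it has the KL property at every point:
\begin{itemize}
\item with exponent $2/3$ on the circles $|z|=r_j$;
\item trivially at noncritical points;
\item vacuously at the origin (a local maximum) and on or outside the unit circle, since no nearby values lie just above $\Psi(\bar z)$.
\end{itemize}

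Now take $z^{(j)}=r_j(\cos\theta_j,\sin\theta_j)$ with $\theta_j=\sum_{i<j}1/i$. This sequence is bounded and satisfies (H1) with $a=1/8$, because $\|z^{(j+1)}-z^{(j)}\|\le 2/j$ while $\Psi$ drops by $1/(j(j+1))$. It satisfies (H2) for every $b$, because $\nabla\Psi(z^{(j)})=0$. Yet it winds around the unit circle forever and has infinite length.

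Every cluster point has $\Psi(\bar z)=-1<0=\Psi^*$. This is precisely the failure of $\Psi\equiv\Psi^*$ on $\omega$ that you anticipated. Since the merit function of \Cref{lem:psi_KL} is continuous, your added hypothesis holds wherever the paper uses the lemma.

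\paragraph{Two minor points.}
\begin{itemize}
\item Properness alone does not bound $\Psi(z^{(j)})$ from below. Use instead that a proper lsc function attains a finite infimum on the compact closure of the bounded sequence.
\item Your uniformized route needs $\Psi\equiv\Psi^*$ on all of $\omega$, i.e.\ value convergence along every convergent subsequence, which continuity supplies. The original Attouch--Bolte--Svaiter proof instead localizes around a single cluster point. It needs (H3) and the KL inequality only at that point, and does not even use boundedness of the sequence. It is therefore the version to cite if you want the weakest correct hypotheses.
\end{itemize}
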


The convergence of \Cref{alg:PD-KKT} then follows from standard convergence analysis techniques for primal-dual algorithms (e.g., see \cite{combettes2012primal,attouch2013convergence,wright2022optimization}), which we state below as the main result of this section.

\begin{theorem}
    Assume~\Cref{assump_g}, \Cref{assump_f}, \Cref{assume_JR}, \Cref{ass:bounded_dom_gstar}, and that
    \begin{equation}\label{eq:jac_bounds}
\|\Jac[\ell](u)\|_2 \le L_J \quad\text{ and }\quad \|\Jac[\ell](u)-\Jac[\ell](v)\|_2 \le L_H\|u-v\|, \quad \forall u,v\in\R^d.
\end{equation}
Consider the primal--dual iteration of \Cref{alg:PD-KKT} with parameters $(\rho,\eta)$ satisfying
\begin{equation}\label{eq:stepsizes}
0<\rho<\frac{1}{L_J^2}
\quad\text{and}\quad
0<\eta\le 1.
\end{equation} 
Let $\Psi$ be defined by \eqref{merit}, and set $z^{(j)}:=(u^{(j)},\pi^{(j)})$. 

Further assume that the iteration satisfies the standard descent conditions (H1)--(H2) of
Lemma~\ref{lem:abstract_KL} for the merit function $\Psi$
(this assumption can be enforced, for instance, by a simple backtracking safeguard on the damping parameter $\eta$). Then,
\begin{enumerate}
    \item[(i.)] The sequence $\{z^{(j)}\}_{j\ge0}$ is well-defined and bounded.
Moreover, $\Psi(z^{(j)})$ 
decreases with each iteration. 
\item[(ii.)] Every cluster point $(\bar u,\bar\pi)$ of $\{z^{(j)}\}$ satisfies the optimality system~\eqref{optimal_cond}; i.e., all limit points are first-order stationary. 
\item[(iii.)]  The full sequence $z^{(j)}$ converges to a single limit $(\bar u,\bar\pi)$. 
\item[(iv.)] If, in addition, $\Psi$ has KL exponent $\theta\in[0,1)$ at $(\bar u,\bar\pi)$, then the standard rates hold:
finite length for $\theta=0$, linear for $\theta=\tfrac12$, and sublinear $O(j^{-\frac{1-\theta}{2\theta-1}})$ for $\theta\in(1/2,1)$.
\end{enumerate}
\end{theorem}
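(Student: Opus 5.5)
The plan is to reduce everything to the abstract KL result, Lemma~\ref{lem:abstract_KL}, applied to $\Psi$ from \eqref{merit} and $z^{(j)}=(u^{(j)},\pi^{(j)})$. Lemma~\ref{lem:psi_KL} supplies the KL property: $\Psi$ is proper, lsc, definable, and in fact continuous. The hypotheses (H1)--(H2) are assumed in the statement, enforced by backtracking on $\eta$. So the real work is in three places: well-definedness and boundedness of the iterates, identifying critical points of $\Psi$ with solutions of \eqref{optimal_cond}, and quoting the rates.

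\textbf{Step (i).} Each iteration is well-defined. The prox of the proper lsc convex $g^*$ is single-valued, and $B^{(j)}\succeq(\mu+\alpha c)I_d$ is invertible. By Lemma~\ref{lem:pi_bounded}, $\pi^{(j)}\in\dom(g^*)$, which is bounded by Assumption~\ref{ass:bounded_dom_gstar}.

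For boundedness of $u^{(j)}$, I would use (H1): $\Psi(z^{(j)})\le\Psi(z^{(0)})$, so the first term of $\Psi$ is bounded. Since $\|B(u)\|_2\le \mu+\alpha c+L_J^2$, we have $\|r\|_{B^{-1}}^2\ge \|r\|^2/(\mu+\alpha c+L_J^2)$. Hence
\[
\|(\mu+\alpha c)u-cx+\Jac[\ell](u)^\top\pi\|
\]
is bounded along the iterates. With $\|\Jac[\ell](u)^\top\pi\|\le L_J\sup_{\dom g^*}\|\pi\|$, this forces $\|u^{(j)}\|$ to be bounded. Monotone decrease of $\Psi$ is also immediate from (H1).

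\textbf{Steps (ii)--(iii).} Lemma~\ref{lem:abstract_KL} gives finite length and convergence $z^{(j)}\to\bar z$, with $\bar z$ a critical point, and in particular every cluster point equals $\bar z$. I also need $\Psi(\bar z)=0$. Finite length gives $\|z^{(j+1)}-z^{(j)}\|\to0$, and I would use this together with the update rules directly.

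First, $\pi^{(j+1)}-\pi^{(j)}\to0$. Continuity of the prox and of $\ell$ then gives the fixed-point relation $\bar\pi=\prox_{\rho g^*}(\bar\pi+\rho(\ell(\bar u)+E(\bar u,\bar\pi)))$.

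Second, $u^{(j+1)}-u^{(j)}=-\eta B^{(j)-1}r^{(j)}(u^{(j)})\to0$. Since $\|B^{(j)}\|$ is bounded, $r^{(j)}(u^{(j)})\to0$. Continuity of $\Jac[\ell]$ (Lipschitz with constant $L_H$) then gives vanishing residual at $(\bar u,\bar\pi)$.

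Lemma~\ref{lem:fixedpoint_optcond} converts these two facts into \eqref{optimal_cond}. This way I never have to analyze the possibly degenerate critical set of $\Psi$ itself.

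\textbf{Step (iv) and the main obstacle.} The rates are the standard Attouch--Bolte consequence of the KL inequality with $\varphi(s)=cs^{1-\theta}$ combined with (H1)--(H2), and I would cite them rather than rederive them. The step I expect to be delicate is the boundedness of $u^{(j)}$: it needs the uniform bound $L_J$ on $\Jac[\ell]$ in both places, namely to control $B(u)$ from above and to control $\Jac[\ell]^\top\pi$. The step-size condition $\rho<1/L_J^2$ is only used implicitly, as what makes (H1) attainable by backtracking. Since (H1)--(H2) are assumed here, I would just note this and not verify it.
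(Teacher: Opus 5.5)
Your proposal is correct and follows essentially the same route as the paper. Part (i) rests on well-definedness of the prox and of $B\succeq(\mu+\alpha c)I_d$, boundedness of $\pi^{(j)}$ from Lemma~\ref{lem:pi_bounded}, and boundedness of $u^{(j)}$ from coercivity of the residual term of $\Psi$ under (H1). Part (ii) passes to the limit in the update rules and invokes Lemma~\ref{lem:fixedpoint_optcond} to obtain \eqref{optimal_cond}. Parts (iii)--(iv) follow from Lemmas~\ref{lem:psi_KL} and~\ref{lem:abstract_KL}. Your coercivity step is stated more carefully than the paper's: what is needed is the upper bound $\|B(u)\|_2\le\mu+\alpha c+L_J^2$, which gives $\|r\|_{B(u)^{-1}}^2\ge\|r\|^2/(\mu+\alpha c+L_J^2)$, not an upper bound on $B(u)^{-1}$ as the paper's wording suggests.
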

\begin{proof}
For (i.), 
    the proximal update for $\pi^{(j+1)}$ is well-defined and single-valued since $g^*$ is proper, lsc, and convex. 
    By Lemma~\ref{lem:pi_bounded}, $\{\pi^{(j)}\}$ is bounded. 
    Moreover, $B(u)\succeq (\mu+\alpha c)I_d$, so the Gauss-Newton direction in \Cref{alg:PD-KKT} is well-defined. 
    Since $\Psi(z^{(j)})$ decreases and $\Psi\ge 0$, the values $\Psi(z^{(j)})$ stay bounded for all $j$. 
    The first term in \eqref{merit} is coercive in $u$ on bounded $\pi$-sets because the residual contains
$(\mu+\alpha c)u$ and $B(u)^{-1}$ is uniformly bounded from above. Hence, $\{u^{(j)}\}$ is bounded, which proves that $\{z^{(j)}\}$ is bounded. $\Psi(z^k)$ decreasing in $k$ is exactly (H1) in \Cref{lem:abstract_KL}.  

For (ii.), let $(\bar u,\bar\pi)$ be any cluster point of $\{z^{(j)}\}$. 
Since $\|z^{(j+1)}-z^{(j)}\|\to 0$ and the update map in \Cref{alg:PD-KKT} is continuous under \eqref{eq:jac_bounds},
passing to the limit in the fixed-point form of the iteration yields that $(\bar u,\bar\pi)$ is a fixed point.
By Lemma~\ref{lem:fixedpoint_optcond}, any such fixed point satisfies \eqref{optimal_cond}, which proves (ii.).

For (iii.) and (iv.), 
by Lemma~\ref{lem:psi_KL}, $\Psi$ satisfies the KL property.
Together with boundedness and (H1)-(H2), Lemma~\ref{lem:abstract_KL} yields that
$\sum_j\|z^{(j+1)}-z^{(j)}\|<\infty$ and $z^{(j)}\to(\bar u,\bar\pi)$ for some critical point, which proves (iii.). (iv) follows from the KL exponent rates in Lemma~\ref{lem:abstract_KL}. 
\end{proof}

\section{Extension to constrained MOO}\label{sec-con}
State constraints are required in most real-world applications but can complicate MOO.  In this section, we show that the approach and algorithm from the previous section have straightforward extensions to MOO problems~\eqref{MOO_obj} with inequality constraints, i.e., problems of the form 
\begin{equation}\label{eq:constrained_moo}
\begin{aligned}
    & \min_{u} \ell(u) : = (\ell_1(u),\ell_2(u),\dots, \ell_N(u)) \\ 
    & \ \text{s.t. } \ k_i(u) \geq0 , \ i = 1,2,\dots, m,
\end{aligned}
\end{equation}
where $k_i:\R^d\to\R$ is continuously differentiable $\forall i$. We denote the constraint set by
\begin{equation}
    K : = \{ u \in \R^d \mid k_i(u) \geq0 , \ i = 1,2,\dots, m  \}. 
\end{equation} 
Throughout this section, we assume that $K$ is nonempty, closed, compact, and convex and that standard constraint qualifications (e.g., Slater’s condition~\cite{rockafellar1970convex,BoydVand2004}) hold, so that normal cones and KKT conditions behave in the usual way.

Similarly to Section~\ref{sec:fixedpt}, one Pareto optimal solution to~\eqref{eq:constrained_moo} can be obtained by applying a preference function $g$ but now subject to $u \in K$. 
A standard convex analysis technique for enforcing the constraints $K$ is to use the convex indicator function $I_K$ defined by $I_K(u) = 0$ if $u\in K$ and $I_K(u) = +\infty$ otherwise.
Adding $I_K(u)$ to the objective function guarantees that infeasible $u$ never contribute. Hence, one Pareto optimal solution to the constrained MOO problem~\eqref{eq:constrained_moo} can be found by solving the following classical optimization problem:
\begin{equation}
    \min_{u \in \R^d} g \circ \ell(u) + I_K(u).
\end{equation}
Similarly, the corresponding fixed point formulation is identical to that in~\eqref{saddle-one} but with an additional $+I_K(u)$ term.

\subsection{Differential game formulation}

We now extend the associated differential game formulation and Hopf–Lax representation to the constrained MOO case. 
Let $\Uu_K$ denote the set of admissible controls with values in $K$, that is,
       $ \Uu_K:= \{u:[\alpha,\beta] \to K \mid u(\cdot) \text{ is measurable} \}, $
and keep the same set~$\Lambda$ of dual trajectories as in the unconstrained setting. Then, we define the upper value function as
\begin{equation}
\begin{adjustbox}{width=\textwidth}
$    V^+_K((x,\tau),\alpha) :=  \sup_{\lambda \in \Lambda} \inf_{u \in \Uu_K}\Big\{\int_{0}^{\alpha} \Big( \sum_{i=1}^N \lambda_i(s) \ell_i(u(s)) -g^*(\lambda(s)) + R(u(s)) \Big) ds + J(x(\alpha),\tau(\alpha))  \Big\},
$
\end{adjustbox}
\end{equation} 
where the dynamics for $(x(\cdot),\tau(\cdot))$ are the same as in~\eqref{dynamics}. Note that the only change from the unconstrained case is that the control $u$ is now constrained to lie in $\Uu_K$. 
    
The associated upper Hamiltonian with constraints is given by
\begin{equation}\label{hamiltonian_k}
\begin{aligned}
    H^+_K(p,E):
&=
\min_{u\in K} \Big\{ \langle p,-u\rangle + g\circ\big(\ell(u)+E\big) + R(u)\Big\} \\ &= 
-\big(g\circ(\ell(\cdot)+E)+ R+I_K\big)^*(p) \ 
\end{aligned}
\end{equation}
for $(p,E)\in\R^d\times\R^N$, where $I_K$ is the indicator of $K$. 
Under \Cref{assump_f}, \Cref{assump_g}, \Cref{assume_JR}, and the above assumptions on~$K$, the minimum in~\eqref{hamiltonian_k} is attained for all $(p,E)$, the map $H^+_K$ is continuous and has at most linear growth,  and thus the HJI equation $F(x, \nabla V^+_K)$
admits a unique viscosity solution, which coincides with $V^+_K$ by the dynamic programming principle, exactly as in the unconstrained case.
As in~\Cref{subsec-hopf-noncons}, for a general proper, lsc, convex terminal cost $J:\R^d\times\R^N\to\Rb$ with convex conjugate $J^*$, the value function can be represented by the Hopf-Lax formula
\begin{equation}\label{value_con}
   V^+_K((x,\tau),\alpha)
    =
    \max_{p\in\R^d,\;E\in\R^N}
    \left\{
    \langle x, p\rangle + \langle \tau, E\rangle  - J^*(p,E) + \alpha\, H^+_K(p,E)
    \right\},
\end{equation}
which is the constrained analogue of~\eqref{hopf-upper}, obtained by replacing $H^+$ with $H^+_K$. 

\subsection{Optimality conditions and a primal-dual algorithm}
In this section, we present the optimality conditions for the finite-dimensional saddle point problem in~\eqref{value_con}. We then provide a corresponding primal-dual algorithm for numerically computing the solution that mirrors the unconstrained scheme in~\Cref{alg:PD-KKT}.

As in~\Cref{sec:unconstrained_algorithm}, we choose a quadratic terminal cost and quadratic regularizer, as in~\eqref{final_regu}. 
Then, we have $J^*(p,E) = \frac{1}{2c}(\|p\|^2+\|E\|^2)$, and~\eqref{value_con} reads
\begin{equation}\label{eq:hopf-upper-constrained-num}
    V^+_K((x,\tau),\alpha)
    = \max_{p\in\R^d,\;E\in\R^N}
    \Big\{
      \langle x,p\rangle + \langle \tau,E\rangle
      - \tfrac{1}{2c}(\|p\|^2+\|E\|^2)
      + \alpha\,H^+_K(p,E)
    \Big\}.
\end{equation}
For each fixed $(x,\tau,\alpha)$, let $(\bar p_K,\bar E_K)$ be a maximizer of~\eqref{eq:hopf-upper-constrained-num}, and let
\begin{equation}\label{eq:constrained_innermin}
    \bar u_K \in \argmin_{u\in K}
    \Big\{
      \langle \bar p_K,-u\rangle
      + g\big(\ell(u)+\bar E_K\big)
      + R(u)
    \Big\},
\end{equation}
so that $\bar u_K$ is a minimizer of the constrained Hamiltonian $H^+_K(\bar p_K,\bar E_K)$. Let
\begin{equation}\label{eq:constrain_pi}
    \bar\pi_K \in \partial g(\ell(\bar u_K)+\bar E_K),
\end{equation}
where $\partial$ denotes the convex subdifferential. As in the unconstrained case, the stationarity of~\eqref{eq:hopf-upper-constrained-num} in $(p,E)$ yields
\begin{equation}\label{eq:pE-constrained}
    \bar{p}_K = c\,(x-\alpha \bar u_K),
    \qquad
    \bar{E}_K = c\,(\tau + \alpha \bar\pi_K),
\end{equation}
which coincides with the first two relations in~\eqref{optimal_cond}. 

The stationarity of the inner minimization over $u$ in~\eqref{hamiltonian_k} gives
\begin{equation}\label{station_cons-general}
    0 \in -\bar p_K + \partial_u\big(\bar\pi_K\cdot\ell\big)(\bar u_K)
          + \partial R(\bar u_K) + N_K(\bar u_K),
\end{equation}
where $N_K(\bar u_K)$ is the Euclidean normal cone to $K$ at $\bar u_K$. Under the smoothness assumptions on $\ell$ (\Cref{assump_f}) and our choice of quadratic $R$, \eqref{station_cons-general} reduces to
\begin{equation}\label{station_cons}
    0 = -\bar p_K
        + \Jac[\ell](\bar u_K)^\top \bar\pi_K
        + \mu \bar u_K
        + v_K,
    \qquad
    v_K \in N_K(\bar u_K).
\end{equation} 
Substituting~\eqref{eq:pE-constrained} into~\eqref{station_cons}, it is convenient to define the residual map
   $ F(u) := \Jac[\ell](u)^\top \bar\pi_K + \mu u - c\,(x-\alpha u)$, 
so that~\eqref{station_cons} becomes
    $0 \in F(\bar u_K) + N_K(\bar u_K)$. 
By standard convex analysis, this inclusion is equivalent to the variational inequality
\begin{equation}\label{eq:VI}
    \text{Find }\bar u_K\in K,\ \text{s.t.}\ \
    \langle F(\bar u_K),v-\bar u_K\rangle \ge 0,
    \quad \forall\, v\in K.
\end{equation} 
Define $k:\R^d\to \R^m$ by $k := (k_1, k_2,\dots, k_m)$.  Then, introducing inequality multipliers $\nu\in\R^m_+$ for the constraints $k_i(u)\ge0$, the normal cone admits the representation
\[
N_K(\bar u_K)
=
\big\{
  \Jac[k](\bar u_K)^\top \nu
  : \nu\ge0,\ \nu^\top k(\bar u_K)=0
\big\},
\]
and the first-order (KKT) conditions become
\begin{equation}\label{eq:KKT-constr}
\left\{
\begin{aligned}
  &0 =
    \Jac[\ell](\bar u_K)^\top \bar\pi_K
    + \mu \bar u_K
    - \Jac[k](\bar u_K)^\top \bar\nu
    - c\,(x-\alpha \bar u_K),\\[2pt]
  &\bar\nu \ge 0,\quad k(\bar u_K)\ge0,\quad
   \bar\nu^\top k(\bar u_K)=0,
\end{aligned}
\right.
\end{equation}
together with~\eqref{eq:constrain_pi} and~\eqref{eq:pE-constrained}, 
which is the natural constrained counterpart of~\eqref{optimal_cond}.

We now use a simple primal-dual iteration that mirrors the unconstrained scheme but adds a projected dual ascent for the multipliers~$\nu$. Given $(u^{(0)},\pi^{(0)},\nu^{(0)})$ and stepsizes $\rho,\sigma,\eta>0$, one iteration reads:
\begin{enumerate}
\item \textit{Dual update on $\pi$.} As in the unconstrained case, we update $\pi$ using~\eqref{dual_pi}. 
\item \textit{Dual update on $\nu$ (inequality multipliers).} We update $\nu$ via projected gradient ascent in the constraint channel:
      \[
      \nu^{(j+\frac12)} = \nu^{(j)} + \sigma(-k(u^{(j)})),\qquad
      \nu^{(j+1)} = [\nu^{(j+\frac12)}]_+,
      \]
      where $[\cdot]_+$ denotes componentwise projection onto $\R^m_+$.

\item \textit{Primal (Levenberg--Marquardt) update on $u$.} Define
      \[
      r^{(j)}(u) := \Jac[\ell](u)^\top \pi^{(j+1)}
                 - \Jac[k](u)^\top \nu^{(j+1)}
                 + \mu u
                 - c\,(x-\alpha u),
      \]
      \begin{equation}\label{eq:constrained_precond}
      \begin{adjustbox}{width=.8\textwidth}
      $      B^{(j)} := (\mu+\alpha c)I_d
                 + \Jac[\ell](u^{(j)})^\top \Jac[\ell](u^{(j)})
                 + \Jac[k](u^{(j)})^\top W^{(j)} \Jac[k](u^{(j)}),
      $
      \end{adjustbox}
      \end{equation}
      where $W^{(j)}\succeq 0$ emphasizes nearly active constraints (e.g., $W^{(j)}$ is a diagonal matrix built from indicators of $\{i: k_i(u^{(j)})$ is close to $0\}$). We then compute $s^{(j)}$ using
      $B^{(j)} s^{(j)} = r^{(j)}$ and set
      \[
      \tilde u^{(j+1)} = u^{(j)} - \eta\, s^{(j)},\qquad
      u^{(j+1)} =
      \begin{cases}
        \Pi_K(\tilde u^{(j+1)}), & \text{if }\Pi_K\text{ is available},\\
        \tilde u^{(j+1)}, & \text{otherwise}
      \end{cases}
      \]
      where $\Pi_K$ denotes projection onto $K$.
\end{enumerate}

In \Cref{alg:PD-KKT-constr}, we summarize our primal-dual scheme that extends~\Cref{alg:PD-KKT} to the constrained case by adding a projected ascent step on the inequality multipliers and (optionally) a projection onto $K$.

\begin{algorithm}
\caption{Primal--dual scheme for constrained KKT system~\eqref{eq:KKT-constr}.}
\label{alg:PD-KKT-constr}
\begin{algorithmic}[1]
\Require $x\in\R^d$, $\tau\in\R^N$, $\alpha>0$, $c>0$, $\mu>0$; $\ell,k$ with Jacobians $\Jac[\ell],\Jac[k]$;
          $g$ with prox$_g$ (or prox$_{g^*}$);
         step sizes $(\rho,\sigma,\eta)>0$; tolerance $\varepsilon>0$
\State Choose $u^{(0)}\in K$ (e.g., $\Pi_K \frac{x}{\max(\alpha,1)}$), set $\pi^{(0)}\gets 0$, $\nu^{(0)}\gets 0$
\For{$j=0,1,2,\dots$}
  \State \textbf{Dual update on $\pi$:} same as Lines 4-6 in~\Cref{alg:PD-KKT}
  \State \textbf{Dual update on inequality multipliers $\nu$:}
  \State $\nu^{(j+\frac12)} \gets \nu^{(j)} + \sigma\,(-k(u^{(j)}))$
  \State $\nu^{(j+1)} \gets [\nu^{(j+\frac12)}]_+$ \Comment{projection onto $\R^m_+$ (componentwise)}
  \State \textbf{Primal update on control $u$:}
  \State $r^{(j)}(u) \gets
        \Jac[\ell](u)^\top \pi^{(j+1)}
        - \Jac[k](u)^\top \nu^{(j+1)}
        + \mu u
        - c\,(x-\alpha u)$
  \State $B^{(j)} \gets
        (\mu+\alpha c)I_d
        + \Jac[\ell](u^{(j)})^\top \Jac[\ell](u^{(j)})
        + \Jac[k](u^{(j)})^\top W^{(j)} \Jac[k](u^{(j)})$
  \State \Comment{$W^{(j)}\succeq0$ emphasizes nearly active constraints}
  \State Solve $B^{(j)} s^{(j)} = r^{(j)}(u^{(j)})$ 
  \State $\tilde u^{(j+1)} \gets u^{(j)} - \eta\, s^{(j)}$
  \State $u^{(j+1)} \gets
    \begin{cases}
      \Pi_K(\tilde u^{(j+1)}), &\text{if a cheap projector }\Pi_K\text{ is available},\\
      \tilde u^{(j+1)}, &\text{otherwise}
    \end{cases}$
  \If{$\|r^{(j)}(u^{(j+1)})\|\le\varepsilon$ \textbf{and}
      $\|\pi^{(j+1)}-\pi^{(j)}\|\le\varepsilon$ \textbf{and}
      $\|\nu^{(j+1)}-\nu^{(j)}\|\le\varepsilon$}
    \State \textbf{break}
  \EndIf
\EndFor
\State \textbf{Set} $u^\star\gets u^{(j+1)}$, $\pi^\star\gets \pi^{(j+1)}$, $\nu^\star\gets\nu^{(j+1)}$
\end{algorithmic}
\end{algorithm}

\begin{remark}\label{rem:u-update}
    When $K$ admits an efficient projector $\Pi_K$ and $g, \ell, R$ are sufficiently smooth, other numerical methods may be used to compute~\eqref{eq:constrained_innermin} directly. In such cases, \Cref{alg:PD-KKT} can be applied to these constrained problems but with Lines 9-11 replaced by $u^{(j+1)} \gets \argmin_{u\in K}
    \Big\{
      \langle  p^{(j)},-u^{(j)}\rangle
      + g\big(\ell(u^{(j)})+E^{(j)}\big)
      + R(u^{(j)})
    \Big\}$.
\end{remark}

We now briefly state a convergence result for the constrained
primal-dual~\Cref{alg:PD-KKT-constr}. The analysis follows the same structure as in the
unconstrained case. As such, we only sketch the main steps and omit the proof. 

\begin{lemma}
Assume~\Cref{assump_f}, \Cref{assump_g}. 
Assume $g$ is semi-algebraic (more generally, definable in an o-minimal structure)
and that each constraint function
$k_j$ is $\Cc^1$ and semi-algebraic (more generally, definable in an
o-minimal structure). Assume further that $\ell$ is $\Cc^1$ with definable graph. 
Fix $x\in\R^d$, $\tau\in\R^N$,
$\alpha,c,\mu>0$, and define the constrained merit function
$\Psi_K:\R^d\times\R^N\times\R^m\to\Rb$ by
\[
\begin{adjustbox}{width=\textwidth}
$\Psi_K(u,\pi,\nu)
:= \frac12\big\|r(u,\pi,\nu)\big\|_{B(u)^{-1}}^2 + \frac{1}{2\rho^2}\big\|
   \prox_{\rho g^*}\big(\pi+\rho(\ell(u)+E(u,\pi))\big)-\pi
   \big\|^2   + \frac{1}{2\sigma^2}\big\|
   [\nu+\sigma(-k(u))]_+ - \nu
   \big\|^2,$
\end{adjustbox}\]
where
\[
r(u,\pi,\nu)
:= \Jac[\ell](u)^\top \pi
   - \Jac[k](u)^\top \nu
   + \mu u - c(x-\alpha u) ,
\]
\[
B(u):=(\mu+\alpha c)I_d+\Jac[\ell](u)^\top\Jac[\ell](u),\qquad
E(u,\pi)=c(\tau+\alpha\pi),
\]
and $[\cdot]_+$ denotes the projection onto $\R_+^m$.
Then, $\Psi_K$ is proper, lsc, and definable. In particular, $\Psi_K$
satisfies the KL property at every point of its
domain.
\end{lemma}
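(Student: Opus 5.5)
The plan mirrors the proof of \Cref{lem:psi_KL}: show that each of the three summands of $\Psi_K$ is a finite, continuous, definable function of $(u,\pi,\nu)$. Then $\Psi_K$ is definable and continuous, hence lsc, and it is proper because it is finite and nonnegative everywhere. The KL property at every point then follows from the standard result that proper, lsc, definable functions are KL, as cited in \Cref{lem:psi_KL}.

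\textbf{First summand.} Since $\ell$ and each $k_j$ are $\Cc^1$ with definable graphs, the maps $\ell$, $k$, $\Jac[\ell]$ and $\Jac[k]$ are definable. For the Jacobians, I would argue that partial derivatives of a definable $\Cc^1$ function are definable, since they are first-order expressible as limits of difference quotients. Consequently $r(u,\pi,\nu)$ is definable, being built from products and sums of definable maps with the coordinates $\pi,\nu$. Next, $B(u)\succeq(\mu+\alpha c)I_d\succ 0$, so by the last statement of \Cref{lem:definability_closure} the map $u\mapsto B(u)^{-1}$ is definable and continuous. Hence $\tfrac12 r^\top B(u)^{-1} r$ is definable and continuous, being a polynomial expression in definable continuous quantities.

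\textbf{Second summand.} This is literally the second summand of \eqref{merit}. By \Cref{lem:definability_closure}, $\prox_{\rho g^*}$ is definable, and it is continuous (indeed nonexpansive) because $g^*$ is proper, lsc and convex. It is composed with the definable continuous map $(u,\pi)\mapsto \pi+\rho(\ell(u)+c(\tau+\alpha\pi))$. Definability is preserved under composition, and so are the subtraction of $\pi$ and the squared norm.

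\textbf{Third summand.} The componentwise projection $[\cdot]_+$ onto $\R^m_+$ is the map $t\mapsto\max(t,0)$ applied coordinatewise. It is semi-algebraic and continuous, and it is composed with the definable continuous map $(u,\nu)\mapsto \nu-\sigma k(u)$. Hence this term is definable and continuous.

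\textbf{Conclusion and main obstacle.} Finite sums of definable functions are definable, which completes the argument. The only point I expect to need care is a consistent choice of o-minimal structure. All the data ($g$, $\ell$, $k$) must be definable in a \emph{common} o-minimal structure for the closure properties to apply. If they are given in different structures, I would replace them by one structure containing all of them, for instance assuming they all lie in the exponential structure $\R_{\exp}$, which covers log-sum-exp as well as semi-algebraic data. I would state this explicitly as an implicit convention.

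A secondary point is the definability of $\Jac[\ell]$. I would treat it as in \Cref{lem:psi_KL}: the derivative of a definable $\Cc^1$ map is definable because its graph is first-order definable from the graph of $\ell$.
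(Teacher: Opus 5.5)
Your proposal is correct and takes the same route the paper intends. The paper omits this proof as mirroring the unconstrained case, and the proof of \Cref{lem:psi_KL} is exactly your term-by-term check that each summand is finite, continuous and definable; the one new ingredient, the third summand, you handle correctly via the semi-algebraic map $t\mapsto\max(t,0)$ composed with $(u,\nu)\mapsto\nu-\sigma k(u)$. Your point about a common o-minimal structure is well taken, but it should be imposed as a standing hypothesis (as you ultimately do) rather than obtained by enlarging to a structure containing all the data, since two o-minimal expansions of the real field need not have a common o-minimal expansion.
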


\begin{theorem}
Assume~\Cref{assump_f}, \Cref{assump_g}, and that $\ell$ and $k$ are $\Cc^1$ and have bounded, Lipschitz Jacobians on $\R^d$. Fix $x,\tau,\alpha,c,\mu$ as above.
Let $\{(u^{(j)},\pi^{(j)},\nu^{(j)})\}_{j\ge0}$ be the sequence
generated by the constrained primal--dual iteration
of~\Cref{alg:PD-KKT-constr} with sufficiently small step  sizes $(\rho,\sigma,\eta)$ (e.g., $0<\rho<\bar\rho$, $0<\sigma<\bar\sigma$, $0<\eta\le1$
for suitable constants $\bar\rho,\bar\sigma>0$ depending only on the
Lipschitz moduli). 
Further assume that the iteration satisfies standard descent conditions (e.g., (H1)--(H2) of
Lemma~\ref{lem:abstract_KL}). 
Then,
\begin{enumerate}
\item[i.] The merit function $\Psi_K(u^{(j)},\pi^{(j)},\nu^{(j)})$
  decreases with each iteration. Moreover,  
  $\|z^{(j+1)}-z^{(j)}\|\to 0$, where
$z^{(j)}:=(u^{(j)},\pi^{(j)},\nu^{(j)})$. 
\item[ii.] Every cluster point
  $(\bar u,\bar\pi,\bar\nu)$ is a first-order
  stationary point of the constrained problem, i.e., it satisfies the
  KKT system~\eqref{eq:KKT-constr} together with~\eqref{eq:pE-constrained}.
\item[ii.] If $\Psi_K$ has KL exponent $\theta\in[0,1)$ at $(\bar u,\bar\pi,\bar\nu)$, then the standard KL rates hold
(finite length for $\theta=0$, linear for $\theta=\tfrac12$, and sublinear otherwise). 
\end{enumerate}
\end{theorem}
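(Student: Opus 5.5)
The plan is to mirror the unconstrained convergence theorem, with the third multiplier block $\nu$ handled by the projection-residual term in $\Psi_K$. Throughout, write $z^{(j)}=(u^{(j)},\pi^{(j)},\nu^{(j)})$.

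\textbf{Step 1: descent and vanishing increments.} By the assumed (H1), $\Psi_K(z^{(j+1)})\le\Psi_K(z^{(j)})-a\|z^{(j+1)}-z^{(j)}\|^2$, so $\Psi_K(z^{(j)})$ decreases. Since $\Psi_K\ge0$, summing the telescoping inequality gives $\sum_j\|z^{(j+1)}-z^{(j)}\|^2\le\Psi_K(z^{(0)})/a<\infty$, hence $\|z^{(j+1)}-z^{(j)}\|\to0$. This proves (i.) and needs no boundedness. The well-definedness of the iteration follows as before:
\begin{itemize}
\item $\prox_{\rho g^*}$ and $[\cdot]_+$ are single-valued;
\item $B^{(j)}\succeq(\mu+\alpha c)I_d$, since the added term $\Jac[k]^\top W^{(j)}\Jac[k]$ is positive semidefinite;
\item $\Pi_K$ is single-valued because $K$ is closed and convex.
\end{itemize}

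\textbf{Step 2: cluster points are KKT points.} Let $z^{(j_n)}\to(\bar u,\bar\pi,\bar\nu)$. Because increments vanish, $z^{(j_n+1)}\to(\bar u,\bar\pi,\bar\nu)$ as well. All update maps are continuous: the prox, the positive-part projection, $\Pi_K$, and $u\mapsto B(u)^{-1}r(u,\cdot)$, the last by the bounded Lipschitz Jacobians. Passing to the limit in each update therefore gives fixed-point identities, which we translate one block at a time.

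\begin{itemize}
\item \emph{$\pi$-block.} Exactly as in Lemma~\ref{lem:fixedpoint_optcond}, this gives $\bar\pi\in\partial g(\ell(\bar u)+E)$ with $E=c(\tau+\alpha\bar\pi)$, i.e.\ \eqref{eq:constrain_pi}--\eqref{eq:pE-constrained}.
\item \emph{$\nu$-block.} The identity $\bar\nu=[\bar\nu-\sigma k(\bar u)]_+$ is equivalent componentwise to $\bar\nu\ge0$, $k(\bar u)\ge0$ and $\bar\nu^\top k(\bar u)=0$. This follows from the projection characterization onto $\R^m_+$, i.e.\ $-k(\bar u)\in N_{\R^m_+}(\bar\nu)$.
\item \emph{$u$-block, no projector.} The identity $\bar u=\bar u-\eta B(\bar u)^{-1}r(\bar u,\bar\pi,\bar\nu)$ forces $r=0$, which is the first line of \eqref{eq:KKT-constr}.
\item \emph{$u$-block, with $\Pi_K$.} The fixed point $\bar u=\Pi_K(\bar u-\eta B^{-1}r)$ only yields $0\in B^{-1}r+N_K(\bar u)$. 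I would argue that, combined with the $\nu$-complementarity already obtained, the normal-cone contribution is absorbed into $\Jac[k]^\top\bar\nu$ under the constraint qualification (Slater). This is the delicate point; the cleanest route is to also invoke (H2): $\dist(0,\partial\Psi_K(z^{(j+1)}))\to0$ shows the limit is critical for $\Psi_K$. Any zero of $\Psi_K$ gives \eqref{eq:KKT-constr} directly, since all three residuals vanish.
\end{itemize}

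\textbf{Step 3: rates and the main obstacle.} The preceding lemma gives definability, hence the KL property of $\Psi_K$. Together with (H1)--(H2), Lemma~\ref{lem:abstract_KL} yields finite length, convergence of the whole sequence, and the standard rates when $\Psi_K$ has KL exponent $\theta$ at the limit. The rates follow from the usual estimate on $\varphi(\Psi_K(z^{(j)})-\Psi_K(\bar z))$.

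Lemma~\ref{lem:abstract_KL} requires a bounded sequence. I expect that boundedness of $\{z^{(j)}\}$, together with identifying critical points of $\Psi_K$ with genuine KKT points in Step~2, is the main obstacle.

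\begin{itemize}
\item $u^{(j)}$ is bounded if $\Pi_K$ is used, since $K$ is compact.
\item Otherwise, $u^{(j)}$ is bounded by coercivity of $r$ in $u$, via the $(\mu+\alpha c)u$ term, exactly as in the unconstrained proof, but only once $\pi,\nu$ are bounded.
\item $\pi^{(j)}\in\dom(g^*)$, which is bounded under Assumption~\ref{ass:bounded_dom_gstar}; I would add that assumption here.
\item $\nu^{(j)}$ is the issue: I would bound it using Slater's condition, via a Slater point $u_0$ with $k(u_0)>0$, which bounds the multipliers near KKT points. Failing that, I would simply assume bounded iterates, consistent with the statement's reliance on "standard descent conditions".
\end{itemize}

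If these bounds cannot be made rigorous, the result (ii.)--(iii.) would be stated for cluster points only, which Step~2 already covers.
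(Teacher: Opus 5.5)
Your outline is the argument the paper intends (it omits this proof, saying it mirrors the unconstrained one), and Step 1 and the no-projector branch of Step 2 are sound. One correction there: $W^{(j)}$ is built from indicators, so the $u$-update is not continuous. You only need $\sup_j\|B^{(j)}\|<\infty$, which gives $r(u^{(j)},\pi^{(j+1)},\nu^{(j+1)})=B^{(j)}(u^{(j)}-u^{(j+1)})/\eta\to0$.

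The genuine gap is the $\Pi_K$ branch of Step 2, where neither of your repairs works. Write $\bar z:=(\bar u,\bar\pi,\bar\nu)$. The absorption idea fails for two reasons:
\begin{enumerate}
\item Since $u^{(0)}\in K$ and every iterate is projected, $k(u^{(j)})\ge0$ for all $j$. From $\nu^{(0)}=0$ the update then gives $\nu^{(j)}=[\nu^{(j-1)}-\sigma k(u^{(j-1)})]_+=0$ for every $j$. So $\bar\nu=0$, and \eqref{eq:KKT-constr} demands $r(\bar u,\bar\pi,0)=0$ outright; there is no multiplier to absorb anything.
\item The fixed-point relation is $-\tilde B^{-1}r\in N_K(\bar u)$, with $\tilde B$ a cluster value of $B^{(j)}$. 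The projection is Euclidean but the step is preconditioned, so this is the variational inequality for $\tilde B^{-1}F$, not \eqref{eq:VI} for $F$. With one active constraint of outward normal $n$, it says $r\in-\R_+\tilde B n$, whereas stationarity needs $r\in-\R_+ n$. Such fixed points need not be KKT points for any multiplier.
\end{enumerate}
The (H2) route only gives $0\in\partial\Psi_K(\bar z)$, and you then invoke a property of \emph{zeros} of $\Psi_K$. A critical point of a sum of squared residuals need not be a zero.

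What is missing is an argument upgrading criticality to $\Psi_K(\bar z)=0$, and the $\nu$-block supplies it.
\begin{enumerate}
\item At the fixed point, the prox- and projection-residual terms are scaled squared norms of Lipschitz maps vanishing at $\bar z$. Their limiting subdifferentials there are therefore $\{0\}$.
\item The first term is $C^1$ in $\nu$ with jointly continuous partial gradient $-\Jac[k](u)B(u)^{-1}r$. Hence every limiting subgradient has that $\nu$-component, and $0\in\partial\Psi_K(\bar z)$ yields $\Jac[k](\bar u)B(\bar u)^{-1}r=0$.
\item Under the constraint qualification, the fixed point gives $\tilde B^{-1}r=\Jac[k]_A(\bar u)^\top t$ for some $t$, where $A$ is the active set and $\tilde B=B(\bar u)+\Jac[k]^\top\tilde W\Jac[k]$.
\item Substituting gives $(I+M\tilde W)\Jac[k]\Jac[k]_A^\top t=0$ with $M=\Jac[k]B(\bar u)^{-1}\Jac[k]^\top\succeq0$. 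The eigenvalues of $M\tilde W$ are nonnegative, so $I+M\tilde W$ is invertible. Thus $\Jac[k]_A\Jac[k]_A^\top t=0$, hence $\Jac[k]_A^\top t=0$ and $r=0$.
\end{enumerate}
So $\Psi_K(\bar z)=0$ and \eqref{eq:KKT-constr} holds with $\bar\nu=0$.

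Separately, the boundedness you single out as the main obstacle is not needed for this statement. Part (i) claims no boundedness, (ii) concerns a given cluster point, and (iii) assumes the KL inequality at that point. The local Attouch--Bolte--Svaiter argument, using (H1), (H2) and $\Psi_K(z^{(j)})\downarrow\Psi_K(\bar z)$, then gives convergence, finite length and rates. It needs neither Assumption~\ref{ass:bounded_dom_gstar}, nor a Slater bound on $\nu$, nor the definability lemma.
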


\section{Numerical experiments}\label{sec-numerics}

In this section, we demonstrate the capabilities of our algorithms in exploring the Pareto front in several complex MOO settings, including those involving semi-algebraic constrained optimization, nonconvex objective functions, and/or high dimensions in both the dimension $d$ of the domain and the dimension $N$ of the objective function. 
In all of the experiments, we take the soft-max (i.e., entropy-type) preference function $g$ of the form
\begin{equation}
     g(y) \;=\; \varepsilon_\ell \log\!\Big(\sum_{i=1}^N e^{\frac{y_i}{\varepsilon_\ell}}\Big)
\end{equation}
with $\epsilon_\ell  = 0.1$. 
Note that this choice of $g$ satisfies~\Cref{assump_g} and is differentiable with explicitly computable derivative. As such, in all experiments, we update $\pi$ in~\Cref{alg:PD-KKT} and~\Cref{alg:PD-KKT-constr} according to Remark \ref{rem:pi-update}.
The terminal cost $J$ and regularizer $R$ are chosen  as in~\eqref{final_regu}, that is
\begin{equation}
    J(x,\tau) = \frac{c}{2} \| (x,\tau) \|^2, \quad R(u) = \frac{\mu}{2} \| u\|^2. 
\end{equation} 
All numerics were implemented in \textsc{Matlab} and run on an Apple M3 Pro CPU. 

\subsection{Example 1. Constrained semi-algebraic optimization}\label{example1}

We first solve a 2D constrained MOO problem with semi-algebraic feasible set, inspired by the examples in~\cite{magron2014approximating}. 
Consider the objective function $\ell = \{\ell_1, \ell_2 \} \in \R^2$, where 
\begin{equation}
    \ell_1(u) =  - u_1, \quad \ell_2(u) = u_1 + u_2^2
\end{equation}
with polynomial constraints $k_1, k_2$ given by
\begin{equation}
    k_1(u) = - u_1^2 + u_2, \quad k_2(u) = -u_1-2 u_2 + 3.
\end{equation}
Then, the feasible set is defined by the intersection of two polynomial inequalities 
\begin{equation}
    K = \{ u \in \R^2 \mid k_1(u) \geq0, k_2(u) \geq 0 \}.
\end{equation}
\begin{figure}
    \centering
    \subfigure[Discovered Pareto optimal solutions within the feasible set.]{
    \includegraphics[width=0.4\linewidth]{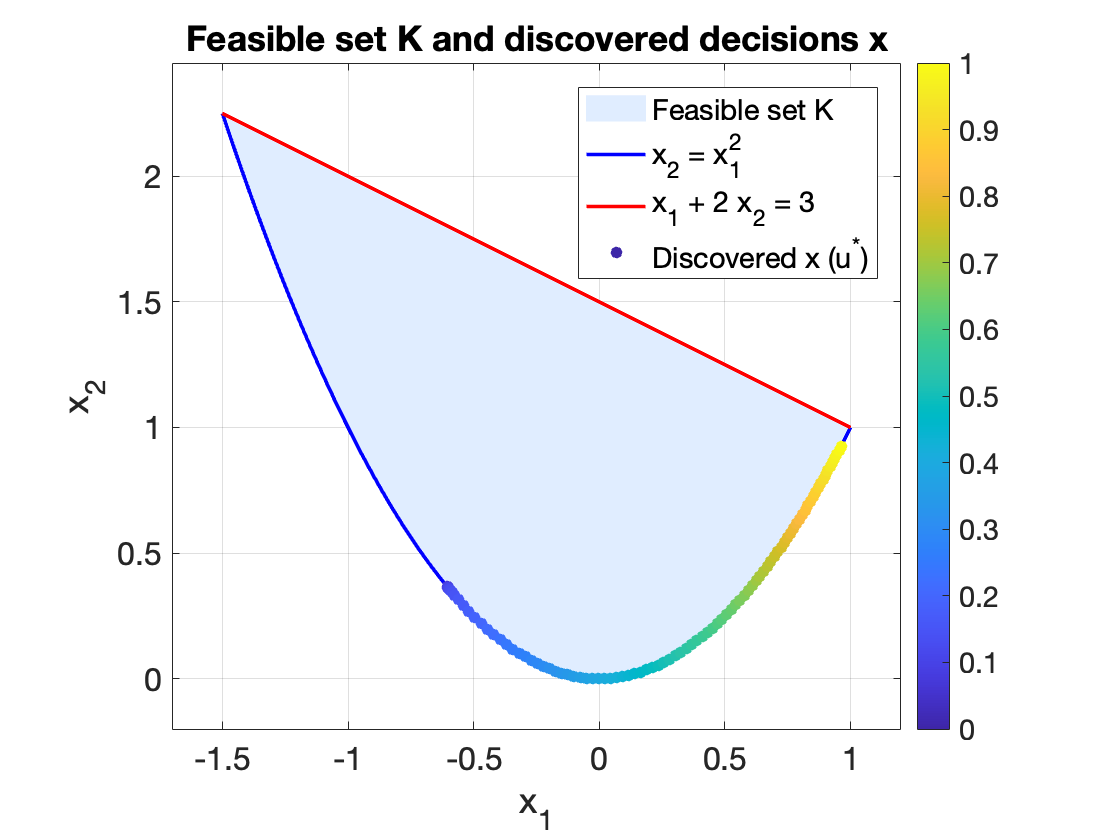}}
    \hfill
    \subfigure[Pareto front recovered by our HJ/Hopf-lax solver.]{
    \includegraphics[width=0.4\linewidth]{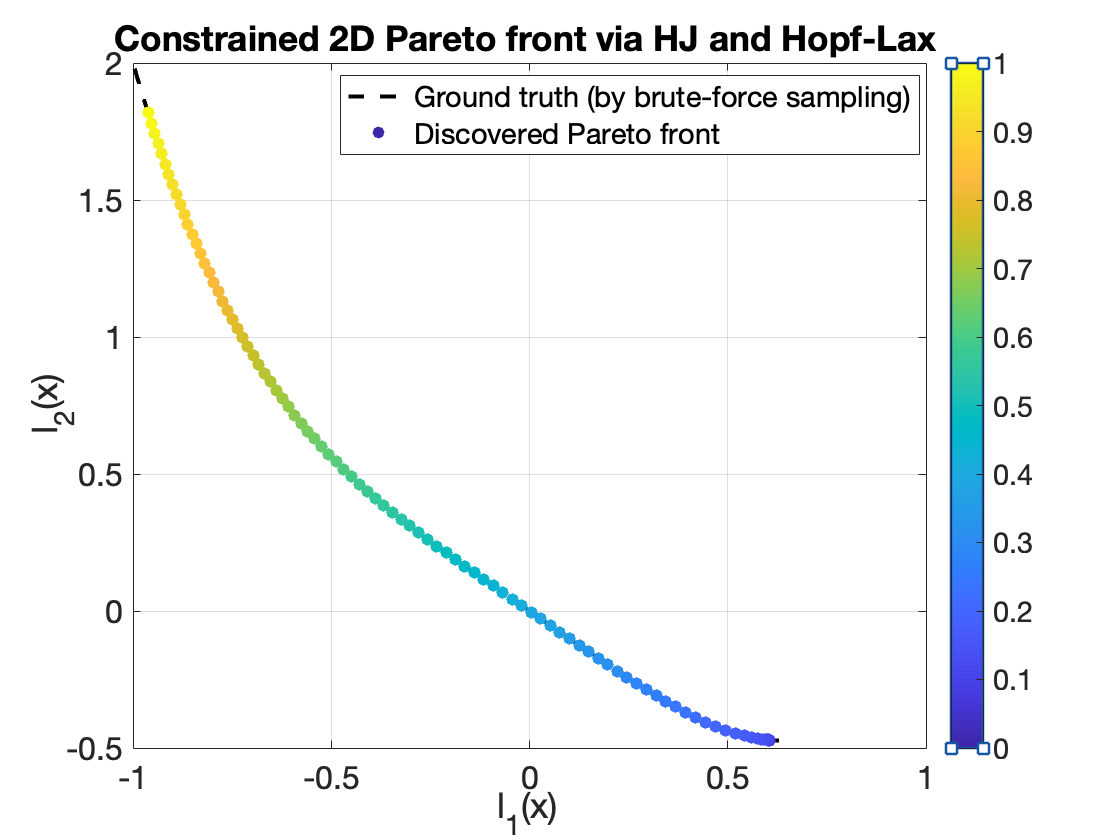}}
    \caption{\textit{Constrained semi-algebraic MOO problem.} Our primal-dual HJ/Hopf-Lax algorithm ($\cdot$) exactly recovers the ground truth Pareto front ($--$) of a 2D constrained convex MOO problem with semi-algebraic feasible set in less than 0.1s. In particular, all discovered Pareto optimal solutions lie on the boundary of the feasible set. }
    \label{fig:case1}
\end{figure}

In \Cref{fig:case1}b, we see that our algorithm perfectly recovers the ground truth Pareto front in less than 0.1 seconds, where the ground truth is obtained using a greedy algorithm and by sampling 20,000 points in the feasible set $K$. \Cref{fig:case1}a displays the geometric shape of $K$. We observe that all discovered Pareto optimal solutions are within $K$ (in particular, they lie on the boundary of $K$) and hence are feasible solutions of the constrained MOO problem.

\subsubsection{Implementation details}\label{sec:ex1_implementation}
To recover the Pareto front, we apply \Cref{alg:PD-KKT-constr} with parameters $\alpha =1, c = 0.1, \mu = 0.01$. We fix $x$ to be $(0,0)$ and vary $\tau $ from $(-10,10)$ to $(10,-10)$ along the diagonal path. We use at most $\texttt{maxit\_outer} = 100$ iterations and a stopping tolerance of $\varepsilon = 10^{-5}$.
Since $\Pi_K$ can be efficiently computed and the problem is sufficiently smooth, we update $u^{(j+1)}$ according to Remark~\ref{rem:u-update}. Specifically, 
we solve the $u$-subproblem~\eqref{eq:constrained_innermin} using projected gradient descent preconditioned by~\eqref{eq:constrained_precond} and with adaptive
  step size (backtracking with factor $\beta = 0.5$ and Armijo parameter  $c_1 = 10^{-4}$), at most $\texttt{maxit\_u} = 200$ iterations, and
  stopping tolerance $\texttt{tol\_u} = 10^{-4}$ on the iterate displacement. 
  The projection $\Pi_K$ 
  is implemented via Dykstra’s algorithm with a fixed number of $\texttt{proj\_cycles} = 10$ Dykstra cycles per projection, and scalar root-finding for the epigraph projection (cubic equation) is carried out with tolerance $\texttt{root\_tol} = 10^{-6}$.  
  We refer readers to~\cite{kelley1995iterative,nocedal2006numerical} for more details on the above fixed-point and backtracking strategies. 

\subsection{Example 2. Nonconvex Pareto fronts}\label{sec:ex_nonconvex}

We now consider two 2D MOO problems on a simple box, where  both are designed to exhibit \emph{nonconvex} Pareto fronts in the objective space. 
In both cases, the decision variable is $u = (u_1,u_2) \in K = [0,1]^2$. We compare against 
both the ground-truth Pareto front obtained via greedy Pareto selection on an exhaustive sampling of $K$ on a grid as well as the convex envelope of the true Pareto front in the objective space.
Specifically, to compute the ground-truth Pareto front, we evaluate $\ell$ on a $150 \times 150$ uniform grid in $K = [0,1]^2$ and obtain the Pareto points via a greedy Pareto selection (minimization). To compute the convex envelope, we minimize various linear weighted sums of the two objectives.
The implementation details for our HJ/Hopf-Lax solver are the same as in~\Cref{sec:ex1_implementation}. 
\begin{figure}
    \centering
    \subfigure[Case 1. Nonconvex example.]{
      \includegraphics[width=0.4\linewidth]{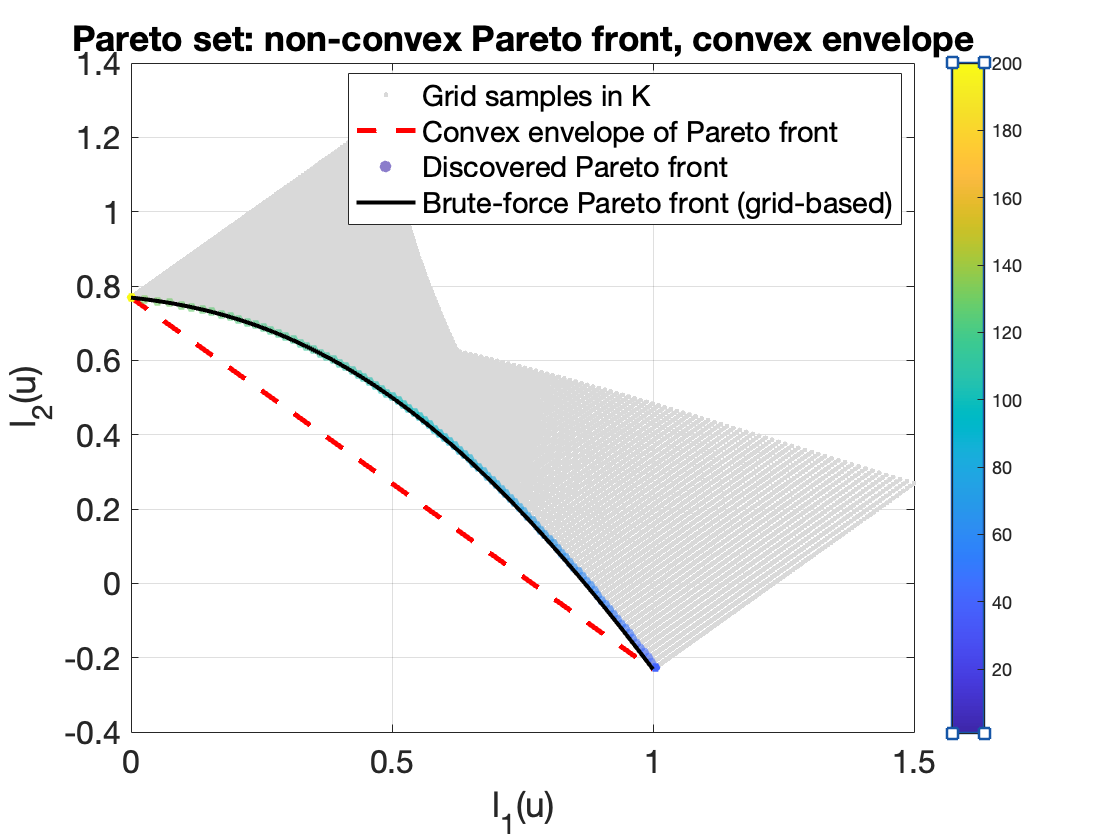}}
    \subfigure[Case 2. Highly nonconvex example.]{
      \includegraphics[width=0.4\linewidth]{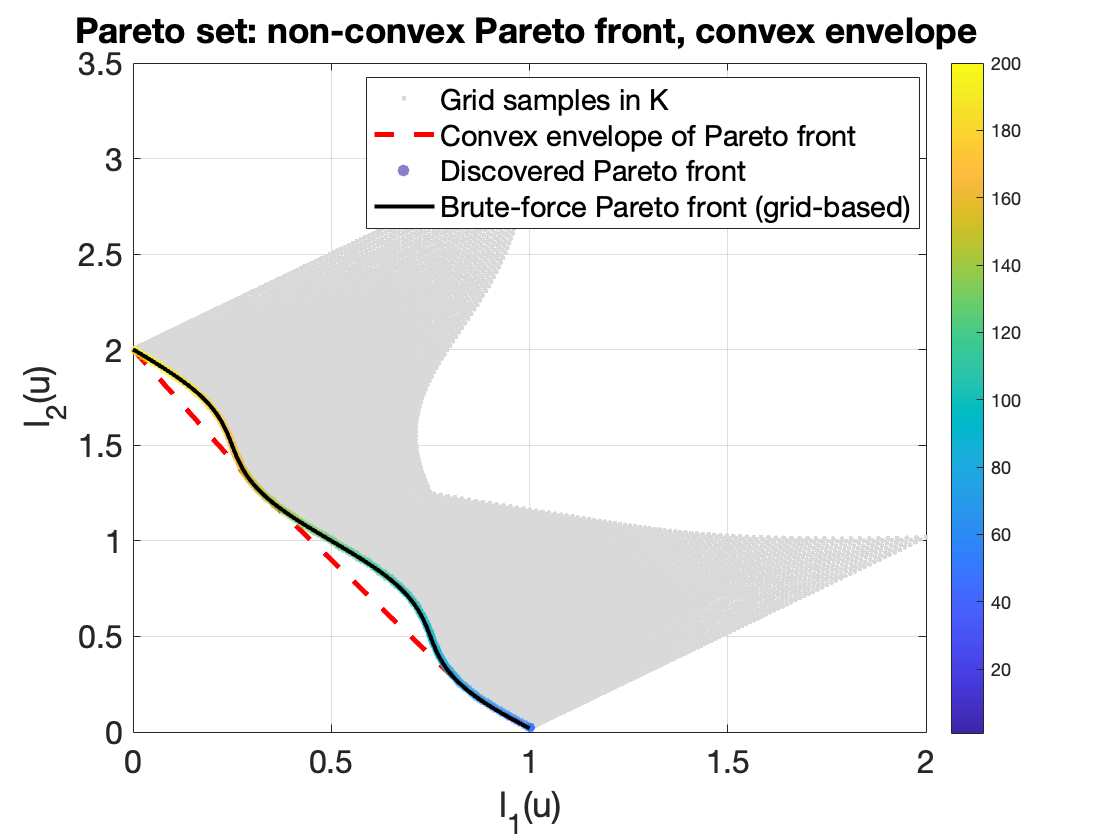}}
    \caption{\textit{Nonconvex Pareto fronts with $K= [0,1]^2$.} Whereas most conventional MOO algorithms only recover the convex envelopes (\textcolor{red}{$--$}) of nonconvex Pareto fronts (\textbf{--}), our primal-dual HJ/Hopf-Lax algorithm (colored dots) is able to perfectly discover highly nonconvex Pareto fronts, including regions that lie strictly above their convex envelopes. In both cases, the computational time of our algorithm is less than 0.1s.}
    \label{fig:case2}
\end{figure}

We consider the following two cases:
\begin{enumerate}
    \item  Define the two objectives as
\begin{equation}
\left\{
\begin{aligned}
    & \ell^1_1(u) = u_1 + \lambda (u_2 - u_1)^2, \\  
    &\ell^1_2(u) = 1 - u_1 
      + a (u_1 - 0.5)^4 
      - b (u_1 - 0.5)^2 
      + \lambda (u_2 - u_1)^2,
      \end{aligned}
      \right.
\end{equation}
where $a = 0.3$, $b = 1$, and $\lambda = 0.5$. For each fixed $u_1$, both $\ell^1_1$ and $\ell^1_2$ are minimized at $u_2 = u_1$, so the Pareto set in the decision space should be close to the diagonal (i.e., $u_2 \approx u_1$ in $K$), and, due to the structure of $\ell^1_2$, its  image $(\ell^1_1,\ell^1_2)$ forms a smooth but nonconvex curve in the objective space.   

 \item Define the two objectives as
\begin{equation}
\left\{
\begin{aligned}
    & \ell^2_1(u) =  u_1 + \gamma_1 \sin(4\pi u_1) + \beta_1 (u_2 - u_1)^2, \\  
    &\ell^2_2(u) = (u_1 - 0.25)^4 (u_1 - 0.75)^2 + \eta (1 - u_1) + \beta_2 (u_2 - u_1)^2 ,
      \end{aligned}
      \right.
\end{equation}
where $\gamma_1 = 0.05$, $\beta_1 = \beta_2 = 1$, and $\eta = 2$. 
The variance term $(u_2 - u_1)^2$ in both objectives penalizes deviations from the diagonal $u_1 = u_2$, so Pareto optimal solutions are again expected to cluster near $u_2 \approx u_1$.
On the diagonal, one has $(u_2 - u_1)^2 = 0$, and the induced curve $t \mapsto (\ell_1^2(t,t),\ell_2^2(t,t))$ forms a highly nonconvex front in the objective space. 
\end{enumerate}

In \Cref{fig:case2}, we see that the convex envelope (\textcolor{red}{$--$}) lies below the true Pareto front (\textbf{--}), as expected since the Pareto front is nonconvex. Most conventional MOO algorithms generally are only able to recover the convex envelope of nonconvex Pareto fronts.
In contrast, our algorithm (colored dots) accurately resolves the whole Pareto front, including nonconvex portions that lie strictly above the convex envelope. In both cases, the computational time for our algorithm is less than 0.1 seconds.



\subsection{Example 3. High-dimensional, nonconvex MOO}

Here, we consider two high-dimensional problems with decision variables $u \in [0,1]^d$ for various $d$. Define
\begin{equation}\label{highd_sr}
    s(u) \coloneqq \frac{1}{d} \sum_{i=1}^d u_i,
    \qquad
    r^2(u) \coloneqq \frac{1}{d} \sum_{i=1}^d (u_i - s(u))^2,
\end{equation}
which can be thought of as an analogue of the mean and variance of the decision variables, respectively, and will be used to define the objective functions. 

\subsubsection{Case 1. High-dimensional decision space}\label{sec:ex3_case1}
We consider the following objective functions:
\begin{equation}
    \left\{
    \begin{aligned}
    \ell_1(u)
    &= s(u) + \gamma_1 \sin\bigl(2\pi s(u)\bigr) + \beta_1\, r^2(u) \ , \\
    \ell_2(u)
    &= 1 - s(u)
       + a \bigl(s(u)-\tfrac{1}{2}\bigr)^4
       - b \bigl(s(u)-\tfrac{1}{2}\bigr)^2
       + \beta_2\, r^2(u),
\end{aligned}
    \right.
\end{equation}
where $a = 1, b = 0.7, 
    \gamma_1 = 0.1,
    \beta_1 = \beta_2 = 0.5.$ Similarly to~\Cref{sec:ex_nonconvex}, along the diagonal $u = t \mathbf{1}_d$ (where $\mathbf{1}_d$ denotes a vector of 1s), one has $r^2(u) = 0$ and the problem reduces to a 1D smooth but nonconvex optimization problem in the scalar variable $s(u)=t$. 
Away from the diagonal, the variance term $r^2(u)$ penalizes anisotropic configurations, so that the Pareto set is constrained to lie near the diagonal but in a high-dimensional ambient space.

\begin{figure}
    \centering
    \subfigure[$d = 3$.]{
      \includegraphics[width=0.33\linewidth]{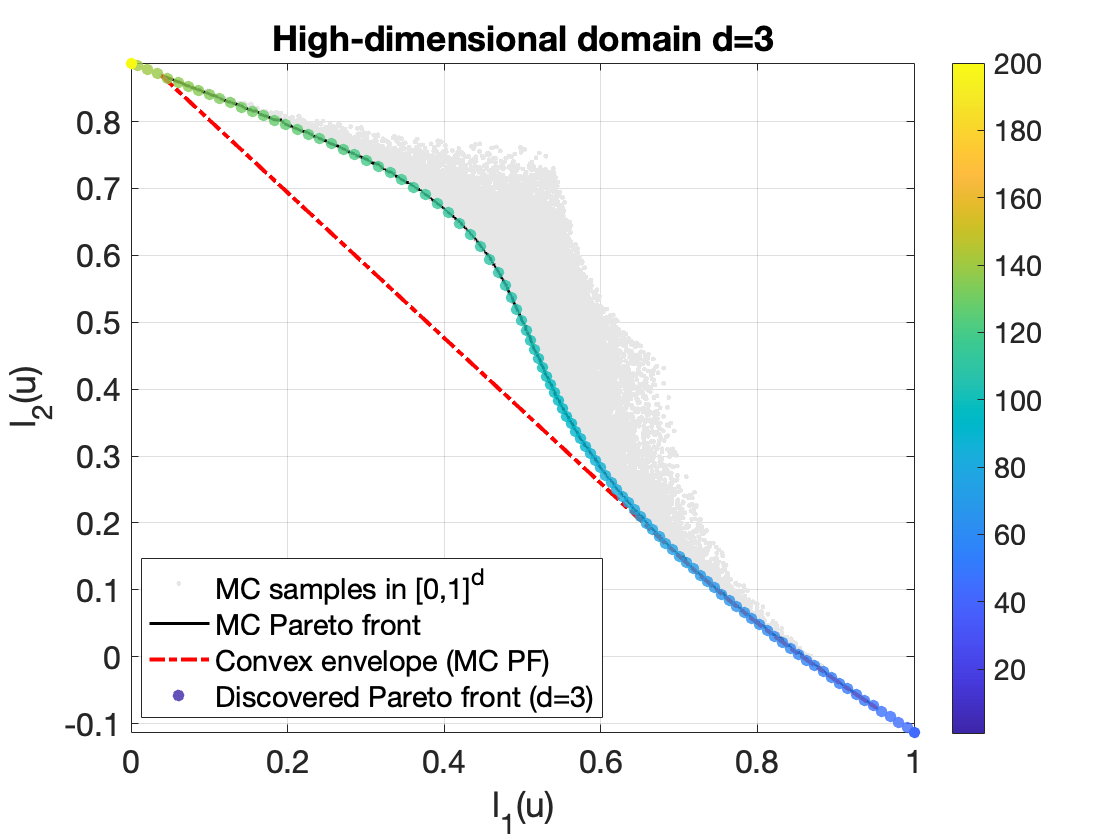}}
    \subfigure[$d = 10$.]{
      \includegraphics[width=0.33\linewidth]{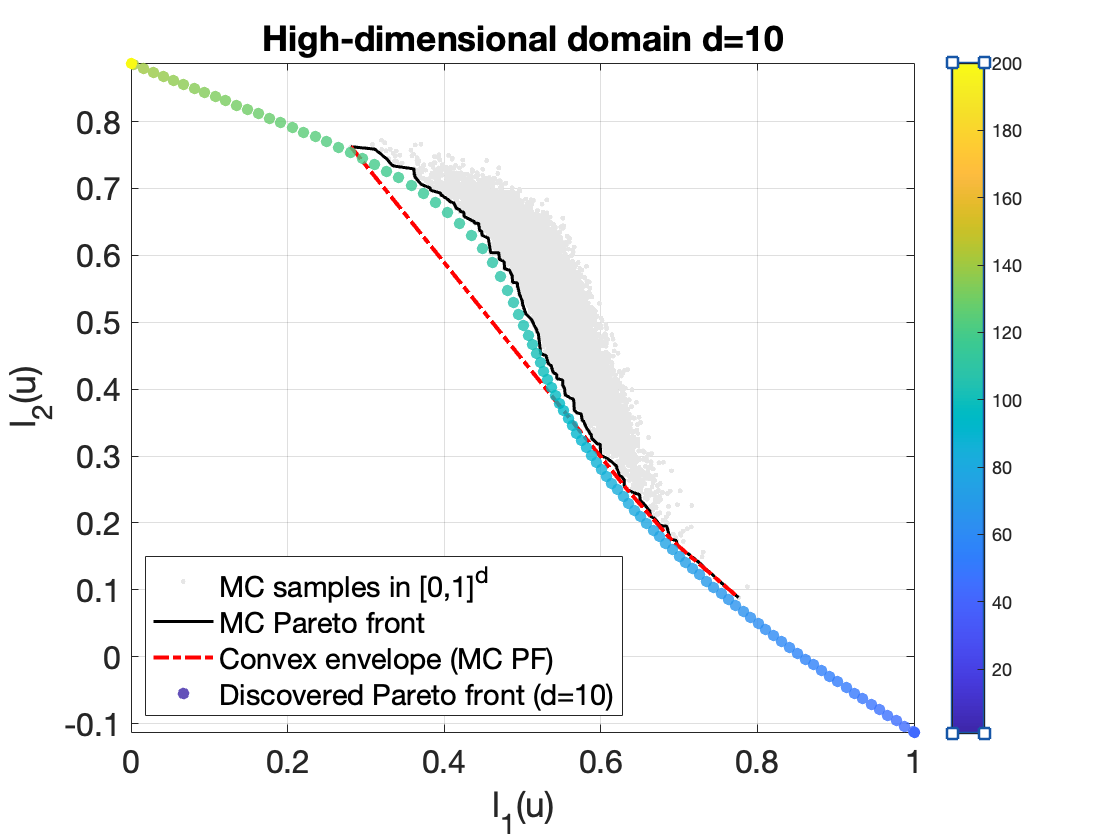}}
      \subfigure[$d = 30$.]{
      \includegraphics[width=0.33\linewidth]{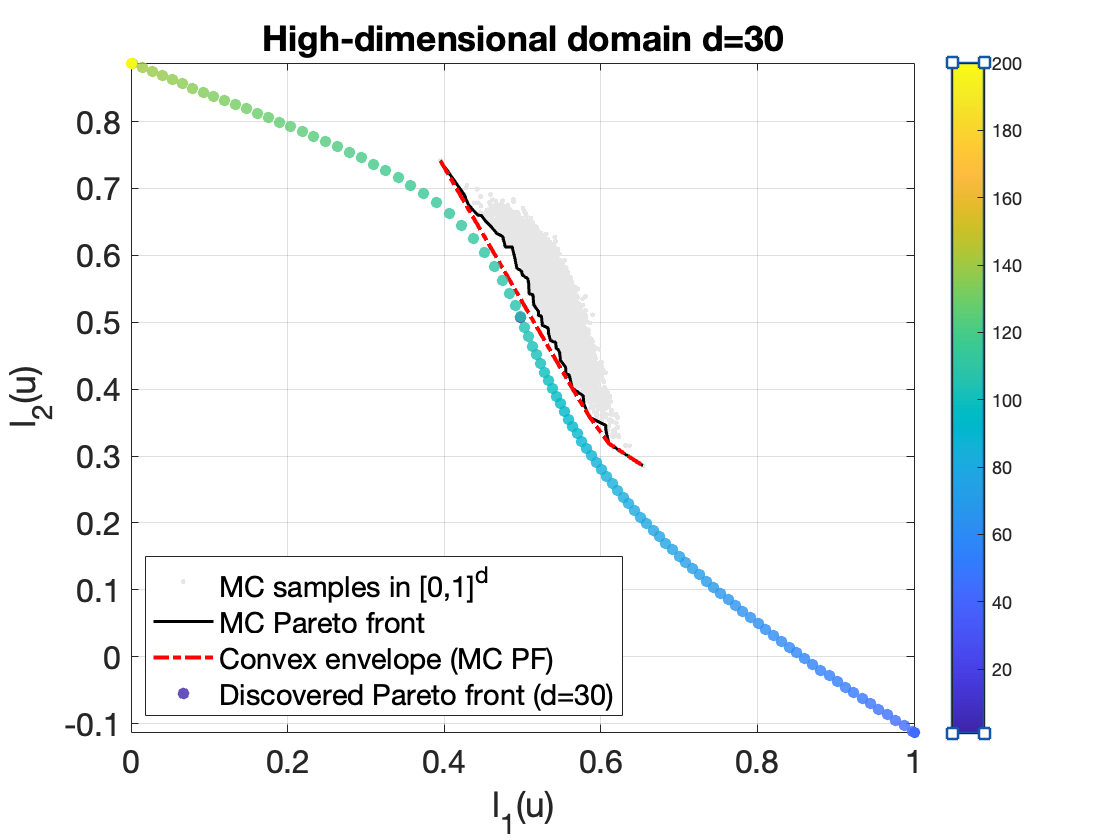}}
      \subfigure[$d = 100$.]{
      \includegraphics[width=0.33\linewidth]{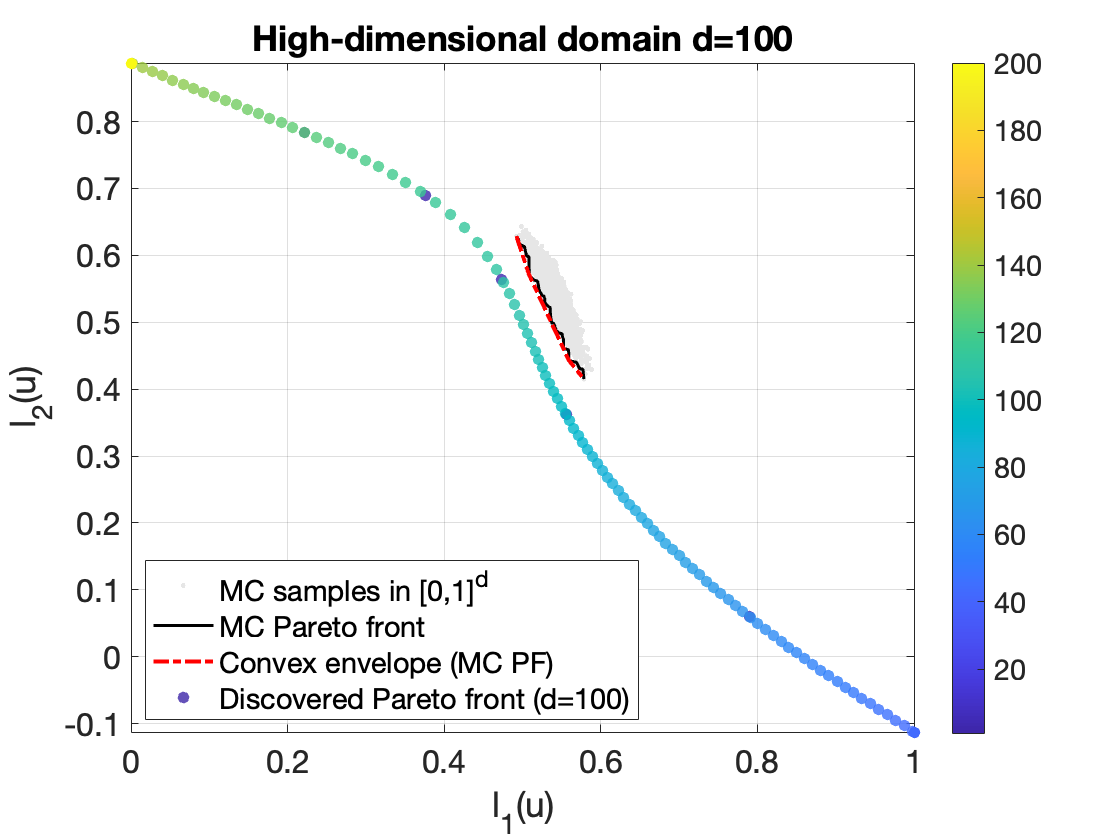}}
    \caption{\textit{Nonconvex Pareto fronts with high-dimensional decision spaces.} Our HJ/Hopf-Lax solver (colored dots) is able to discover nonconvex Pareto fronts with various high-dimensional decision domains $K = [0,1]^d$, including portions that lie strictly above their convex envelope (\textcolor{red}{$--$}). In comparison, we are able to more extensively and continuously explore the Pareto front than the brute force method (greedy Pareto selection with iid Monte Carlo sampling), which, in contrast, only yields an \textit{approximate}, \textit{discrete} reference (\textbf{--}) and becomes exponentially more intractable as $d$ increases. }
    \label{fig:case3}
\end{figure}

We implement our HJ/Hopf-Lax solver similarly to the previous examples but
 scale the parameters as $\varepsilon = 0.1, 
    \alpha = 1, 
    c = \frac{0.1}{d},
    \mu = \frac{0.01}{d}$
    in order to maintain stability as $d$ grows. Analogously to before, we fix $x = 0$ and vary $\tau$ along the diagonal path from $-10\mathbf{1}_d$ to $10\mathbf{1}_d$.
All other implementation details remain the same as in~\Cref{example1}. 
    Since a full tensor grid in $[0,1]^{100}$ is computational intractable, we approximate a reference Pareto front using Monte Carlo sampling. Specifically, we draw $N_{\mathrm{mc}} = 2\times 10^4$ i.i.d. decision vectors $u^{(j)} \sim \mathcal{U}([0,1]^d)$, evaluate $\ell(u^{(j)})$ for each sample, and apply a greedy Pareto selection (for minimization) in the objective space to extract an approximate Monte Carlo Pareto front. Its convex envelope is again computed by minimizing various linear combinations of the objectives. 
    
    In~\Cref{fig:case3}, we see that our algorithm matches the general shape of the approximate nonconvex reference Pareto front for various dimensions $d = 3,10,30,100$, even in regions that lie strictly above their convex envelope. We are also able to more extensively explore the Pareto front than Monte Carlo sampling, which highlights the improved tractability of our algorithm in high dimensions over more traditional approaches. In~\Cref{tab:highd-times}, we observe that the computational runtimes of our algorithm scale polynomially in $d$, thereby mitigating the curse of dimensionality for MOO. 

\begin{table}[t]
    \centering
    \begin{tabular}{c c}
        \hline
        Dimension $d$ & Computational time (s) \\
        \hline
        $3$   & $< 0.1$ \\
        $10$  & $0.25$ \\
        $30$  & $16.73$ \\
        $50$  & $35.56$ \\
        $100$ & $100.37$ \\
        \hline
    \end{tabular}
    \caption{\textit{Computational runtime of our HJ/Hopf-Lax solver for various decision space dimensions $d$.} The runtime of our algorithm scales polynomially with $d$, which highlights its capability to mitigate the curse of dimensionality for MOO.  All timing results are obtained using an Apple M3 Pro CPU.}
    \label{tab:highd-times}
\end{table}

\subsubsection{Case 2. High-dimensional decision and objective spaces}\label{sec:real-world-inspired-example}

We consider a high-dimensional example in which the dimensions of both the decision and objective spaces are large. Let $d=20$, and define $s(u)$ and $r(u)$ by~\eqref{highd_sr}. We consider the following $N_{\mathrm{obj}} = 5$ objectives:
\begin{equation}\label{eq:highd-multi-ell}
\left\{
\begin{aligned}
    \ell_1(u)
    &= s(u) + \gamma_1 \sin\bigl(2\pi s(u)\bigr)
            + \beta_1\, r^2(u), \\
    \ell_2(u)
    &= 1 - s(u)
       + a\bigl(s(u) - \tfrac{1}{2}\bigr)^4
       - b\bigl(s(u) - \tfrac{1}{2}\bigr)^2
       + \beta_2\, r^2(u), \\
    \ell_3(u)
    &= \bigl(s(u) - 0.2\bigr)^2 + c_3\, r^2(u), \\
    \ell_4(u)
    &= \bigl(s(u) - 0.8\bigr)^2 + c_4\, r^2(u), \\
    \ell_5(u)
    &= \tfrac{1}{2}\, s(u)^2 + \gamma_5 \sin\bigl(4\pi s(u)\bigr)
       + c_5\, r^2(u),
\end{aligned}
\right.
\end{equation}
where $
    a = 1.0,  b = 0.7, 
    \gamma_1 = 0.1, 
    \beta_1 = \beta_2 = 0.5, 
    c_3 = 0.3, \; c_4 = 0.4, \; c_5 = 0.2, 
    \gamma_5 = 0.05.
$  We implement our algorithm and obtain approximate reference Pareto fronts and their convex envelopes identically to~\Cref{sec:ex3_case1}.

While this problem is not directly derived from a specific
physical model, its structure is inspired by typical
quantities arising in many-body and multi-agent systems. 
The scalar
$s(u)$ can be interpreted as an average state or control level
(e.g., mean load, temperature, or consensus variable), while the
variance term $r^2(u)$ can be viewed as penalizing spatial heterogeneity or disagreement
across agents. 
The multi-well and oscillatory contributions in each of the objectives $\ell_i$ mimic energy landscapes with multiple preferred
operating regimes and resonance-like effects. 
As such, this example remains representative of realistic trade-offs between global performance and dispersion that arise in many real-life applications.

 In~\Cref{fig:case3_2}, we show projections of our recovered Pareto front (colored dots) into various 2D objective subspaces. As before, we see that we generally match the shape of the approximate reference Pareto front (\textbf{--}), discovering nonconvex regions that lie strictly above its convex envelope (\textcolor{red}{$--$}). Note that while our algorithm recovers continuous 1D curves along the Pareto front, some of the plots appear discontinuous since they only show 2D projections. The runtime of our algorithm is 13.92s, which demonstrates its potential for real-time, high-dimensional, real-world applications.

\begin{figure}
    \centering
    \includegraphics[width=0.75\linewidth]{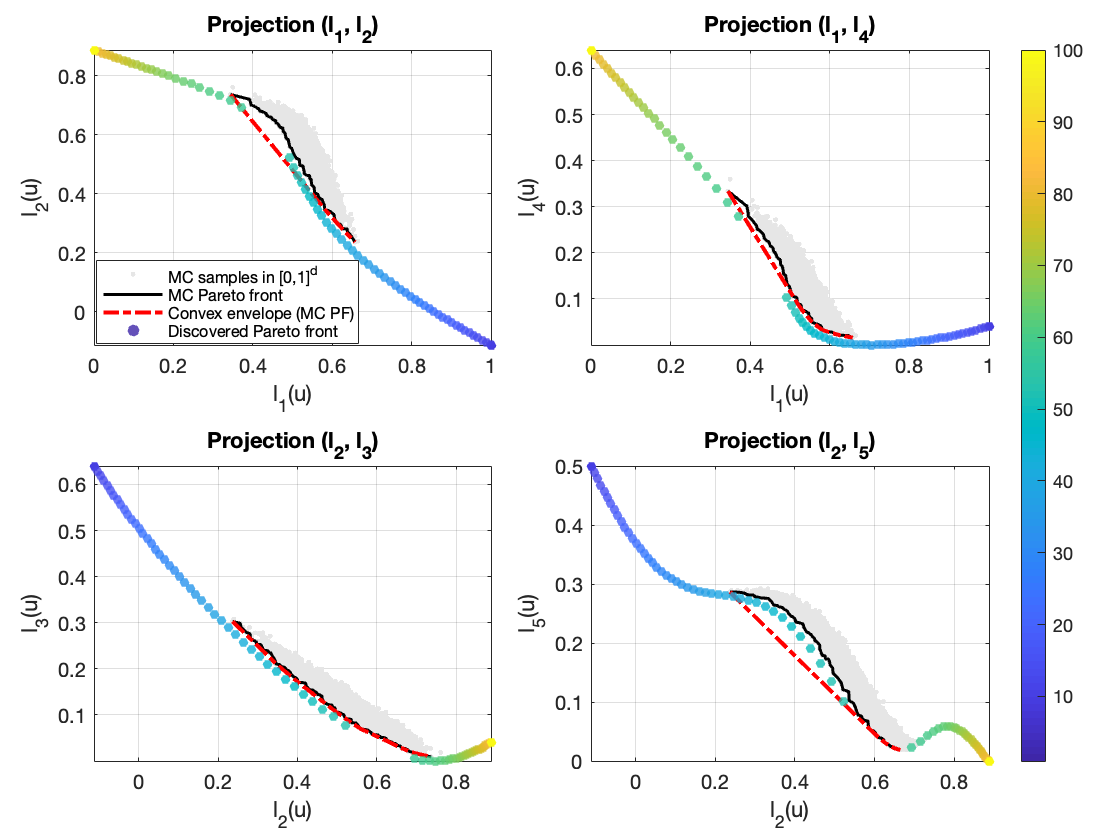}
    \caption{\textit{Projections of a nonconvex Pareto front with high-dimensional decision and objective space into 2D objective space.} Our HJ/Hopf-Lax solver is able to recover a nonconvex Pareto front with 20D decision space and 5D objective space in just 13.92s, which highlights its efficiency and tractability even in very high dimensions. While not derived from a particular physical model, the structure of this MOO problem is inspired by those arising in many-body, multi-agent systems, which demonstrates the potential of our algorithm for high-dimensional, real-world problems. }
    \label{fig:case3_2}
\end{figure}

\section{Summary}\label{summary}
Starting from a monotone preference function, we embedded the Pareto optima for MOO into a parameterized differential game whose upper value solves a first-order HJ equation and admits a Hopf–Lax representation. 
Under mild regularity assumptions, the resulting family of representations traces a dense subset of an induced portion of the weak Pareto front. 
In particular, by allowing for more general convex, monotone preference functions, this representation enables the recovery of Pareto fronts with arbitrarily nonconvex regions that cannot be captured by conventional convex-envelope approximation methods, such as weighted-sum scalarizations. Notably, this result requires very few assumptions on the MOO problem (the objectives just need to be lsc and proper). We then extended this representation to constrained MOO problems that satisfy standard constraint qualifications.

In both cases, we leveraged our new representations to develop efficient primal–dual algorithms for the resulting optimality systems that scale polynomially in the dimension of the decision and objective spaces, thereby mitigating the curse of dimensionality.  
We then numerically demonstrated that our algorithms can efficiently capture nonconvex Pareto geometries in very high dimensions, exposing continuous 1D curves along these fronts. While we use the soft-max function in our experiments, any other preference function that satisfies~\Cref{assump_g} could instead be deployed. In general, the main factors in the choice of preference function is the computational efficiency of its implementation and its domain (which affects how much of the Pareto front can be exposed). Moreover, recovering full coverage of higher dimensional surfaces still poses some challenges, regardless of this choice. 

Our new approach helps bridge the gap between classical single-objective optimization tools (e.g., primal–dual methods, continuation/homotopy, proximal splitting, constraint-handling techniques) and MOO. One future direction is to extend this framework 
to broader multi-objective settings, including richer objective classes, more general feasible sets, and state constraints. 
Our primal-dual algorithm already shows promise for real-world problems. 
One possible application would be to efficiently expose high-dimensional, nonconvex Pareto fronts arising in otherwise expensive engineering and data-driven workflows. For instance, applying our method to MOO problems in machine learning (e.g., accuracy–robustness tradeoffs, multi-task/multi-loss training, learning with safety constraints/regularization) would allow for interpretable Pareto front exploration beyond convex envelope approximations. 

Another interesting direction would be to extend our approach to multi-objective optimal control (MOOC)~\cite{de2009class}, which is a form of infinite-dimensional MOO that replaces the static vector-valued objective with a functional that evolves along a trajectory governed by a controlled dynamical system. In MOOC, one typically seeks controls that steer the state, while jointly optimizing several integral and/or terminal costs, so that Pareto optimality is defined over the space of admissible controls and the associated cost vectors. 
While MOOC problems have also been studied within the HJ framework (see, e.g., \cite{guigue2013set,kumar2010efficient,takei2015optimal,desilles2019pareto}) and
several MOOC solvers based on the combination of discretization, scalarization, and optimization have been proposed (see, e.g., \cite{bellaassali2004necessary, bonnel2010optimization, de2016sufficient}), extending the framework developed here may provide new interpretations and new potential alternative algorithms for this important field.


\bibliographystyle{siamplain}
\bibliography{references}

\end{document}